\newtheorem{theorem}{Theorem}[section]
\newtheorem{corollary}[theorem]{Corollary}
\newtheorem{lemma}[theorem]{Lemma}
\theoremstyle{definition}
\newtheorem{definition}[theorem]{Definition}
\newtheorem{remark}[theorem]{Remark}
\DeclareMathOperator*{\esssup}{ess\,sup}
\begin{document}

\title[]{Continuum limit of $p$-biharmonic equations on graphs}

\author{Kehan Shi $^{\ast}$, Martin Burger $^{\dagger,\ddagger}$}
\address{$^{\ast}$Department of Mathematics, China Jiliang University,
Hangzhou 310018, China}
\address{$^{\dagger}$Computational Imaging Group and Helmholtz Imaging, Deutsches Elektronen-Synchrotron DESY, 22607 Hamburg, Germany}
\address{$^{\ddagger}$Fachbereich Mathematik, Universit\"{a}t Hamburg, 20146 Hamburg, Germany}

 \keywords{PDE on graphs, $p$-biharmonic equation, continuum limit, hypergraph, point cloud}

\begin{abstract}
This paper studies the $p$-biharmonic equation on graphs,
which arises in point cloud processing and can be interpreted as a natural extension of the graph $p$-Laplacian from the perspective of hypergraphs.
The asymptotic behavior of the solution is investigated for the random geometric graph when the number of data points goes to infinity.
We show that the continuum limit is an appropriately weighted  $p$-biharmonic equation with homogeneous Neumann boundary conditions.
The result relies on the uniform $L^p$ estimates for solutions and gradients of nonlocal and graph Poisson equations. The $L^\infty$ estimates of solutions are also obtained as a byproduct.
\end{abstract}

\maketitle


\section{Introduction}
In many problems in machine learning and data processing, we are given a discrete data set $\Omega_n=\{x_i\}_{i=1}^n$ with $n$ data points drawn from a bounded set $\Omega\subset\mathbb{R}^d$ and required to achieve tasks such as  clustering, label spreading, or denoising of the data set.
It is generally assumed that the data set and its geometric structure are  represented by
a graph $G_n=(V_n, E_n, W_n)$ with vertices $V_n=\Omega_n$, edges $E_n$, and weights $W_n$.
An edge $e_{i,j}\in E_n$ connecting two vertices $x_i$ and $x_j$ exists if their distance is sufficiently small. The weight $W_{i,j}\in W_n$ for edge $e_{i,j}$ is a nonincreasing function of the distance.

Notions and algorithms in the continuum setting have been generalized to the graph setting for such problems.
A typical one is the graph $p$-Laplacian regularization \cite{zhou2005regularization,elmoataz2015p}
\begin{equation}\label{eq:1.1}
  \frac{1}{n^2\varepsilon_n^p}\sum_{i,j=1}^nW_{i,j}|u(x_i)-u(x_j)|^p,\quad 1\leq p<\infty,
\end{equation}
for a function $u:\Omega_n\rightarrow\mathbb{R}$, where $\frac{1}{n^2\varepsilon_n^p}$ is the rescaling parameter and $\varepsilon_n$ denotes the connection radius of graph $G_n$.
When $p=2$, it is the well-known Laplacian regularization and has attracted a lot of attention \cite{zhou2005learning,von2007tutorial,gilboa2007nonlocal}.
Different $p$ are preferred for different applications. For example, if $p=1$, it becomes the total variation on graphs, which is the generalization of the graph cut and is particularly suitable for clustering \cite{garcia2016continuum}.
While in semi-supervised learning, $p>d$ is suggested to prevent the occurrence of spikes \cite{el2016asymptotic,slepcev2019analysis}.
The asymptotic behavior of functional \eqref{eq:1.1} as the number of data points goes to infinity has been studied in \cite{garcia2016continuum,slepcev2019analysis} to reveal its connection with the classical $p$-Laplacian regularization.
Since the property of the latter is clear, it  gives us an insight into functional \eqref{eq:1.1} and is crucial in applications.


Recently, higher-order regularization on graphs has been proposed.
In \cite{dong2020cure}, the authors suggested to utilize the graph Laplacian
\begin{equation}\label{eq:1.11}
  \Delta_{G_n} u(x_i)=\frac{1}{n\varepsilon_n^2}\sum_{j=1}^n W_{i,j}(u(x_j)-u(x_i)),\quad x_i\in\Omega_n,
\end{equation}
in the regularizer and interpreted it as an approximation of the curvature regularization on the manifold.
It is a special case of minimizing
\begin{equation}\label{eq:1.2}
  \frac{1}{pn}\sum_{i=1}^n
  \left|\Delta_{G_n} u(x_i)\right|^p+\frac{\lambda}{2}\sum_{i=1}^n|u(x_i)-f(x_i)|^2,
\end{equation}
when $p=2$ and $\lambda=0$.
Here the graph Laplacian $\Delta_{G_n}$ is negative semi-definite, whereas in the machine learning community it is usually defined as positive semi-definite.
 Functional \eqref{eq:1.2} with general $p>1$ and $\lambda>0$ was considered in \cite{el2020discrete} for point cloud denoising, where $f$ denotes the observed point cloud data with noise.
One benefit of utilizing the higher-order regularization in the continuum setting is that it gains more smoothness than the first-order regularization \cite{lysaker2003noise,bredies2010total}. The same property is expected in the graph setting. To verify it we need to show rigorously that functional \eqref{eq:1.2} is an approximation of a continuum higher-order model.


The goal of this paper is to study the asymptotic behavior for the solution of the $p$-biharmonic equation
\begin{align}\label{eq:1.4}
    -\Delta_{G_n}\left(|\Delta_{G_n}u|^{p-2}\Delta_{G_n}u\right)(x_i)
    +\lambda(f(x_i)-u(x_i))=0,&\quad x_i\in\Omega_n,
\end{align}
 on a random geometric graph $G_n$ as $n\rightarrow\infty$, where the data $\Omega_n$ are drawn from a given measure $\mu$ with density $\rho$ and $1<p<\infty$.
Although the continuum limit of PDEs on graphs is considered, the tools we use follow from \cite{garcia2016continuum,slepcev2019analysis}, in which the continuum limit of functional \eqref{eq:1.1} was studied in the $TL^p$ metric via optimal transportation.
The reason we study equation \eqref{eq:1.4} instead of the associated variational model \eqref{eq:1.2} is that
we can treat it as an elliptic system and utilize results for elliptic equations on graphs.
It also inspires us to study more general PDE systems on graphs for data analysis in the future.
The $L^p$ \textit{a priori} estimates for solutions and gradients of graph Poisson equations play a key role.
The uniform $L^\infty$ estimate for the solution is also established as a corollary.

PDEs on graphs can be rewritten as nonlocal equations via the transportation map. The latter have been widely used as the approximation of classical PDEs \cite{andreu2010nonlocal}.
The nonlocal version of equation \eqref{eq:1.4} reads
\begin{align}\label{eq:1.5}
  -\Delta^\eta_{\rho,\varepsilon}\left(|\Delta^\eta_{\rho,\varepsilon}u|^{p-2}\Delta^\eta_{\rho,\varepsilon}u\right)(x)
  +\lambda(f(x)-u(x))=0,&\quad x\in\Omega,
\end{align}
where
\begin{equation*}
  \Delta^\eta_{\rho,\varepsilon}u(x)=\frac{1}{\varepsilon^2}\int_\Omega{\eta}_{\varepsilon}
(|x-y|)(u(y)-u(x))\rho(y)dy, \quad \varepsilon>0,
\end{equation*}
denotes the weighted nonlocal Laplacian and $\eta_\varepsilon(s)=\frac{1}{\varepsilon^d}\eta(s/\varepsilon)$ is a rescaled nonlocal kernel.
Equation \eqref{eq:1.5} acts as a bridge between equation \eqref{eq:1.4} and its continuum analogue, i.e., the classical $p$-biharmonic equation
\begin{align}\label{eq:1.6}
  -\Delta_\rho\left(|\Delta_\rho u|^{p-2}\Delta_\rho u\right)(x)
  +\lambda(f(x)-u(x))=0,&\quad x\in\Omega,
\end{align}
where
\begin{equation*}
  \Delta_\rho u(x)=\frac{\sigma_\eta}{2\rho}\textrm{div}(\rho^2\nabla u),
\end{equation*}
is the weighted Laplacian and $\sigma_\eta$ is a constant.
In fact, we first establish \textit{a priori} estimates for solutions of nonlocal Poisson equations and equation \eqref{eq:1.5}, then generalize the results to graph Poisson equations and equation \eqref{eq:1.4}.
We mention that the nonlocal biharmonic equation (i.e., equation \eqref{eq:1.6} with $p=2$ and $\rho\equiv 1$) has been considered for image processing \cite{wen2023nonlocal}.
The asymptotic behavior for solutions of nonlocal Dirichlet and Navier boundary value problems have been studied in \cite{radu2017nonlocal,shi2022nonlocal}.
 No boundary condition is explicitly posed for equation \eqref{eq:1.5}, which means that equation \eqref{eq:1.6} is with homogeneous Neumann boundary conditions.

This paper is organized as follows. In the rest of this section, we
give a new model explanation  for equation \eqref{eq:1.4} from the perspective of hypergraph $p$-Laplacians.
Section 2 presents assumptions and mathematical tools needed for the study.
Section 3 proves the consistency of the nonlocal/graph Laplacian and the classical Laplacian for sufficiently smooth functions  with homogeneous Neumann boundary conditions.
In Section 4, we establish $L^p$ and $L^\infty$ \textit{a priori} estimates for  nonlocal and graph Poisson equations.
The main result is given in Section 5.
We show the continuum limit of equation \eqref{eq:1.4} when the number of data points goes to infinity.

\subsection*{The $p$-Laplacian equation on hypergraphs}
A hypergraph is a natural extension of a graph in which an edge (also known as hyperedge in the unoriented hypergraph or hyperarc in the oriented hypergraph) can connect to a subset of vertices. It allows us to represent higher-order relationships in data.
Generalizing graph-based methods to hypergraphs has led to improvements in applications \cite{zhou2006learning,hein2013total}.
Besides, it is possible to define differential operators on hypergraphs in a similar way to the graph case and study their properties \cite{jost2019hypergraph,fazeny2023hypergraph}.

The construction of a hypergraph from a given data set is nontrivial and affects the performance in applications. A brief review can be found in \cite{gao2020hypergraph}.
We shall utilize the distance-based $\varepsilon$-neighborhood method in which each hyperedge or hyperarc is a subset consisting of a vertex and its neighbors in the $\varepsilon$-ball.

Let $H=(V, A)$ be an oriented hypergraph with vertices $V$ and hyperarcs $A$.
Each hyperarc $a\in A$ is a pair of elements of the form
\begin{equation*}
  a=(a^{out}, a^{in}),
\end{equation*}
where $a^{out}$ and $a^{in}$ are nonempty and disjoint subsets of $V$ that represent the output and input vertices of the hyperarc.
Scalar products for functions defined on vertices and for functions defined on hyperarcs are given as follows.
\begin{definition}\label{de1.1}
  For $f, g: V\rightarrow\mathbb{R}$, let
  \begin{equation*}
    (f,g)_{V}=\sum_{v\in V}f(v)\cdot g(v).
  \end{equation*}
  For $F, G: A\rightarrow\mathbb{R}$, let
  \begin{equation*}
    (F,G)_{A}=\sum_{a\in A}F(a)\cdot G(a).
  \end{equation*}
\end{definition}
Similar to the oriented graph, we define the gradient operator and its adjoint operator on hypergraphs.
\begin{definition}\label{de1.2}
  For $f: V\rightarrow\mathbb{R}$ and $a\in A$, let
  \begin{equation*}
    \nabla_H f(a)=\sum_{v\in a^{in}}f(v)
    -\frac{|a^{in}|}{|a^{out}|}\sum_{v\in a^{out}}f(v).
  \end{equation*}
  For $G: A\rightarrow\mathbb{R}$ and $v\in V$, let
  \begin{equation*}
    \nabla^*_H G(v)=\sum_{\{a: v\in a^{in}\}}G(a)
    -\sum_{\{a: v\in a^{out}\}}\frac{|a^{in}|}{|a^{out}|}G(a).
  \end{equation*}
\end{definition}
Here we presented one possible definition for the hypergraph gradient for our purposes. Other definitions are also valid and may be of interest in specific problems \cite{jost2019hypergraph,fazeny2023hypergraph}.

It follows from Definition \ref{de1.1}--\ref{de1.2} that
\begin{equation*}
  (\nabla_H f, G)_A=(f, \nabla^*_H G)_V,
\end{equation*}
i.e., $\nabla^*_H$ is the adjoint operator of $\nabla_H$.
Then the divergence operator is given as $$\textrm{div}_H=-\nabla^*_H$$ and the hypergraph Laplacian  $$\Delta_H f=\textrm{div}_H (\nabla_H f),$$ for a function $f: V\rightarrow\mathbb{R}$.

Now let us construct a hypergraph $H_n=(V_n,A_n)$ by choosing $V_n=\Omega_n=\{x_i\}_{i=1}^n$ and $A_n=\{a_i\}_{i=1}^n$, where
\begin{equation*}
  a_i=(a_i^{out}, a_i^{in}),
\end{equation*}
and $a_i^{out}=\{x_i\}$ and $a_i^{in}=\{x_j\in\Omega_n\backslash \{x_i\},
\textrm{dist}(x_i,x_j)<\varepsilon_n \}$. We recall that $\varepsilon_n$ is the connection radius of graph $G_n$ that appears in functional \eqref{eq:1.1}.
Every vertex $x_i$ corresponds to a hyperarc $a_i$ through its output $a_i^{out}$ and vice versa.

The aforementioned differential operators on hypergraph $H_n$ are as follows.
For $f: \Omega_n\rightarrow\mathbb{R}$ and $a_i\in A_n$,
\begin{equation*}
  \nabla_{H_n} f(a_i)=\sum_{x_j\in a_i^{in}}f(x_j)-|a_i^{in}|f(x_i)=\sum_{j=1}^{n}W_{i,j}'(f(x_j)-f(x_i)),
\end{equation*}
where
\begin{align*}
  W'_{i,j}=
  \begin{cases}
    1, &\textrm{if } \textrm{dist}(x_i,x_j)<\varepsilon_n, \\
    0,  &\textrm{otherwise},
  \end{cases}
\end{align*}
is symmetric.
And
for $G: A_n\rightarrow\mathbb{R}$ and $x_i\in V_n$,
  \begin{equation*}
    \nabla^*_{H_n} G(x_i)=\sum_{\{a_j: x_i\in a_j^{in}\}}G(a_j)
    -|a_i^{in}|G(a_i)
    =\sum_{j=1}^{n}W_{i,j}'(G(a_j)-G(a_i)).
  \end{equation*}
By direct calculation, we have the $p$-Laplacian on $H_n$
\begin{align}\label{eq:hyper}
  \textrm{div}_{H_n} (|\nabla_{H_n} f|^{p-2} \nabla_{H_n} f)(x_i)
  =-\Delta_{G'_n}(|\Delta_{G'_n} f|^{p-2}\Delta_{G'_n} f)(x_i),\quad x_i\in\Omega_n,
\end{align}
where we ignore the rescaling parameter $\frac{1}{n\varepsilon_n^2}$ in $\Delta_{G'_n}$ (defined in \eqref{eq:1.11}) and graph $G'_n=(V_n,E_n,W_n':=\{W'_{i,j}\})$ is a simplified version of graph $G_n=(V_n,E_n,W_n)$ with weight one being given to every edge.

Equality \eqref{eq:hyper} establishes the connection between the $p$-biharmonic operator on graphs
and the $p$-Laplacian operator on hypergraphs. This provides a new model explanation for equation \eqref{eq:1.4}. Meanwhile, the asymptotic behavior we obtain for equation \eqref{eq:1.4} is also valid for the hypergraph $p$-Laplacian equation. It should be noticed
that the conclusion applies only to hypergraphs constructed by the $\varepsilon_n$-neighborhood method.
For a general hypergraph, hyperedges can be generated by different methods and differential operators on it may have different definitions. This brings difficulties for studying the asymptotic behavior of PDEs on general hypergraphs.

\section{Preliminaries}
\subsection{Settings}
Let
$
  \Omega_n=\{x_1, x_2,\cdots, x_n\}
$
be a point cloud in a bounded domain $\Omega\subset\mathbb{R}^d$.
The graph-based approach for data processing constructs a weighted graph
$G_n=(V_n, E_n, W_n)$ with vertices $V_n=\Omega_n$, edges $E_n$, and weights $W_n$.
We consider the $\varepsilon_n$-neighborhood graph, in which an edge $e_{i,j}\in E_n$ connecting $x_i$ and $x_j$ exists if $\textrm{dist}(x_i,x_j)<\varepsilon_n$.
Here $\varepsilon_n>0$ is the connection radius depending on $n$.
The weight for edge $e_{i,j}$ is determined by the distance between  $x_i$ and $x_j$ and is set as
\begin{equation*}
  W_{i,j}=\eta_{\varepsilon_n}(|x_i-x_j|),
\end{equation*}
where $\eta_{\varepsilon_n}(s)=\frac{1}{\varepsilon_n^d}\eta(s/\varepsilon_n)$ and
 $\eta$ is a {nonincreasing kernel with compact support.}

Let us consider the $p$-biharmonic equation on graph $G_n$
\begin{align}\label{P}
    -\Delta_{G_n}(|\Delta_{G_n}u|^{p-2}\Delta_{G_n}u)(x_i)+\lambda(f(x_i)-u(x_i))=0, \quad x_i\in \Omega_n,
\end{align}
where $1<p<\infty$ and
\begin{equation*}
  \Delta_{G_n}u(x_i)=\frac{1}{n\varepsilon_n^2}\sum_{j=1}^n\eta_{\varepsilon_n}
  (|x_i-x_j|)(u(x_j)-u(x_i))
\end{equation*}
denotes the graph Laplacian of $u: \Omega_n\rightarrow \mathbb{R}$ at $x_i\in\Omega_n$.

In this paper, we let the number of data points $n\rightarrow\infty$ and study the asymptotic behavior for the solution of equation \eqref{P}.
The assumptions are listed as follows.
\begin{enumerate}
  \item [(A1)] $\Omega\subset\mathbb{R}^d$ $ (d\geq 2)$ is a connected and bounded open domain with sufficiently smooth boundary $\partial\Omega$.
  \item [(A2)]
  $f$ is a bounded and continuous function on $\Omega$.
  \item [(A3)]  $\eta: [0,\infty)\rightarrow [0,\infty)$ is a {nonincreasing piecewise Lipschitz function with compact support.} Without loss of generality, we assume that $\eta(1)\geq 1$ and $\eta(2)=0$.
  \item [(A4)]  The connection radius $\varepsilon_n\rightarrow 0$ as $n\rightarrow\infty$ and has the lower bound
  \begin{align*}
  \varepsilon_n\gg \sqrt{\delta_n},
  \end{align*}
  where
  \begin{align}\label{delta}
    \delta_n=\frac{(\ln n)^{1/d}}{n^{1/d}}.
    \end{align}
  \item [(A5)] The data points $\{x_i\}_{i=1}^n$ are  independent random samples of a probability measure $\mu$ on $\Omega$.
  \item [(A6)] The density $\rho$ of $\mu$ (with respective to the Lebesgue measure) is $C^1(\overline{\Omega})$ with positive lower and upper bounds $0< \inf_{x\in\Omega} \rho(x)\leq \sup_{x\in\Omega}\rho(x)<\infty$.
\end{enumerate}
Throughout this paper, we always assume that (A1)--(A6) hold.
A weaker lower bound $\varepsilon_n\gg \delta_n$ of the connection radius is required for the study of the graph $p$-Laplaian regularization \cite{slepcev2019analysis}.
As we shall see in Lemma \ref{le:3.2}, assumption (A4) ensures the consistency of the graph Laplacian and the classical Laplacian and is necessary for our problem.

\subsection{Mathematical tools}

The main idea of studying the continuum limit of the $p$-biharmonic equation on graphs is to rewrite it as a nonlocal equation and establish \textit{a priori} estimates for the nonlocal equation.
The connection between the probability measure $\mu$ and the associated empirical measure $\mu_n=\frac{1}{n}\sum_{i=1}^{n}\delta_{x_i}$ is required.
In fact, there exists a transportation map $T_n$ from $\mu$ to $\mu_n$, denoted by $T_{n \sharp}\mu=\mu_n$. More precisely, $T_{n \sharp}\mu$ is the push-forward measure of $\mu$ by $T_n$ and
\begin{equation*}
  T_{n \sharp}\mu(A)=\mu\left(T^{-1}_n(A)\right),\quad A\in\mathcal{B}(\Omega),
\end{equation*}
where $\mathcal{B}(\Omega)$ is the Borel $\sigma$-algebra.
By a change of variables,
\begin{equation*}
  \frac{1}{n}\sum_{i=1}^{n}\varphi(x_i)=\int_{\Omega}\varphi(x)d\mu_n(x)
  =\int_{\Omega}\varphi(T_n(x))d\mu(x)=\int_{\Omega}\varphi(T_n(x))\rho(x)dx,
\end{equation*}
for any function $\varphi:\Omega_n\rightarrow\mathbb{R}$.

The following result states the convergence and the convergence rate of $T_n$ to the identity operator $Id$ when the graph becomes denser \cite{trillos2015rate}.
\begin{lemma}\label{le:2.1}
  Let $d\geq 3$ and $\{X_i\}_{i=1}^\infty$ be a sequence of independent random variables with distribution $\mu$ on $\Omega$. Then there exists a sequence of transportation maps $\{T_n\}_{n=1}^\infty$ from $\mu$ to the associated empirical measure $\mu_n$, such that
  \begin{align}\label{measure}
  \begin{split}
    c\leq\liminf_{n\rightarrow\infty}\frac{\|Id-T_n\|_{L^\infty(\Omega)}}{\delta_n}
    \leq\limsup_{n\rightarrow\infty}\frac{\|Id-T_n\|_{L^\infty(\Omega)}}{\delta_n}
    \leq C,
  \end{split}
  \end{align}
  almost surely, where $\delta_n$ is given by \eqref{delta}.
\end{lemma}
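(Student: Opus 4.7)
The statement is an $\infty$-Wasserstein rate of convergence of the empirical measure $\mu_n$ to $\mu$; the plan is to follow the hierarchical-matching strategy pioneered by Leighton--Shor for uniform samples on a cube and adapted to the density/domain setting in \cite{trillos2015rate}. It splits naturally into an upper-bound construction of a near-optimal transport map and a coverage-type lower bound.

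For the upper bound, I would first use (A1) and (A6) to reduce to the uniform measure on a cube by a bi-Lipschitz change of coordinates: since $\rho$ is bounded between positive constants and $\partial\Omega$ is smooth, pulling $\mu$ back through such a map changes the desired bound only by constant factors. Next, I would fix a dyadic partition of the cube whose finest cells have side length comparable to $\delta_n$, so that each cell has expected $\mu$-mass of order $\delta_n^d\sim \log n/n$. A Bernstein (or Chernoff) inequality together with a union bound over all scales controls, with probability at least $1-n^{-c}$, the deviation between the empirical count in each cell and its $\mu$-measure. I would then build $T_n$ hierarchically: within each finest cell, distribute $\mu$ equi-measurably among the empirical points it contains; at the next coarser scale, transport any accumulated excess/deficit between sibling cells; and iterate up the tree. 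Because a cell at scale $k$ has diameter $\sim 2^k \delta_n$ and the mass transported at that scale is bounded via the concentration estimate, the total displacement forms a geometric series in $k$, which sums to $O(\delta_n)$ precisely when $d\geq 3$. A Borel--Cantelli argument applied to the good event for each $n$ then upgrades the in-probability bound to the almost sure statement, yielding the $\limsup$ part of \eqref{measure}.

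For the $\liminf$ part, a packing/coupon-collector argument shows that, almost surely, for infinitely many $n$ there exists a ball in $\Omega$ of radius at least $c\delta_n$ containing none of the samples $X_1,\dots,X_n$; indeed, $n$ balls of radius $r$ cannot cover $\Omega$ with high probability unless $nr^d\gtrsim \log n$. Any transport map from $\mu$ to $\mu_n$ must push the center of such an empty ball by at least its radius in order to reach a sample point, which forces $\|Id-T_n\|_{L^\infty(\Omega)}\geq c\delta_n$. The main technical obstacle will be making the per-scale displacement in the hierarchical matching summable across all dyadic scales uniformly and almost surely: this requires concentration to hold over exponentially many cells simultaneously, and it is precisely the competition between the $2^{kd}$ growth of cell volumes and the square-root fluctuation of cell counts that forces the restriction $d\geq 3$ and pins down the $\delta_n=(\log n)^{1/d}/n^{1/d}$ scaling.
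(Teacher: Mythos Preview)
The paper does not prove this lemma at all: it is quoted directly from \cite{trillos2015rate} as a known result, with no argument given. Your sketch is, in outline, the strategy of that reference (bi-Lipschitz reduction to a cube with uniform density, dyadic hierarchical matching with Bernstein-type concentration and Borel--Cantelli for the upper bound, an empty-ball/coverage argument for the lower bound), so there is nothing substantive to compare against in the present paper.

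A couple of minor technical remarks on your sketch, for accuracy relative to \cite{trillos2015rate}. First, the reduction step is more delicate than a single bi-Lipschitz change of variables: one must check that the push-forward of $\mu$ is still absolutely continuous with density bounded above and below, and that a transport map constructed on the cube pulls back to one on $\Omega$ with displacement distorted only by a constant. Second, your explanation of why $d\geq 3$ is needed is slightly off: the geometric series over scales does not fail to converge in $d=2$; rather, the concentration estimate at each scale carries an extra factor that accumulates to an additional $\sqrt{\log\log n}$ in the rate (cf.\ the remark following the lemma). Neither point is a gap in your plan, but both would need care in a full write-up.
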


Assumption (A4) and \eqref{measure} imply
\begin{equation}\label{eq:Tn}
  \lim_{n\rightarrow\infty}\frac{\|Id-T_n\|_{L^\infty(\Omega)}}{\varepsilon_n^2}=0.
\end{equation}
This is what we need in the following.

\begin{remark}
  When $d=2$,
  there exist a sequence of probability measures $\{\tilde{\mu}_n\}_{n=1}^\infty$ (which are
  absolutely continuous with respect to the measure $\mu$) and a sequence of transportation maps $\{\tilde{T}_n\}_{n=1}^\infty$ from $\tilde{\mu}_n$ to $\mu_n$,
  such that $\|Id-\tilde{T}_n\|_{L^\infty(\Omega)}\ll\varepsilon_n^2$
  and $\frac{d\tilde{\mu}_n}{d\mu}\rightarrow 1$ as $n\rightarrow \infty$ uniformly on $\overline{\Omega}$. See \cite[Lemma 3.1]{caroccia2020mumford}.
  In view of this, in the following we are only concerned with the case $d = 3$.
  All the proofs apply, with minor modifications, to the case $d = 2$.
\end{remark}

Lemma \ref{le:2.1} is stated in the random setting. As in \cite{garcia2016continuum}, we regard the data points as the realization of the random variables $\{X_i\}_{i=1}^\infty$ and consider our problem in the deterministic setting.

We shall consider the limit of PDEs defined on graph $G_n$ when $n\rightarrow\infty$. It requires comparing functions defined on $G_n$ and functions defined on $\Omega$.
The $TL^p$ topology, which was first introduced in \cite{garcia2016continuum}, has become an important tool for this purpose.
It is defined as
\begin{equation*}
  TL^p(\Omega)=\{(\nu,g): \nu\in\mathcal{P}(\overline{\Omega}), g\in L^p(\nu)\},
\end{equation*}
where $\mathcal{P}(\overline{\Omega})$ is the space of probability measures. It is a metric space with
\begin{align*}
  d^p_{TL^p}((\nu_1,g_1),(\nu_2,g_2))=\inf_{\pi\in\Pi(\nu_1,\nu_2)}\left\{\iint_{\Omega\times\Omega}
  |x-y|^p+|g_1(x)-g_2(y)|^pd\pi(x,y)\right\},
\end{align*}
where $\Pi(\nu_1,\nu_2)$ is the set of transportation plans.
In our case when $\nu_1$ satisfies assumption (A6), it has an equivalent form
\begin{align*}
  d^p_{TL^p}((\nu_1,g_1),(\nu_2,g_2))=\inf_{T_\sharp\nu_1=\nu_2}\left\{\iint_{\Omega\times\Omega}
  |x-T(x)|^p+|g_1(x)-g_2(T(x))|^pd\nu_1(x)\right\}.
\end{align*}
The $TL^p$ metric is a generalization of the $L^p$ metric with an additional term $|x-T(x)|^p$ that accounts for the transportation distance between measures $\nu_1$ and $\nu_2$.

A simple characterization of the convergence of sequences in $TL^p(\Omega)$ \cite{garcia2016continuum} is given as follows.
\begin{lemma}\label{le:2.2}
  Let $(\nu,g)\in TL^p(\Omega)$ and $\{(\nu_n,g_n)\}_{n=1}^\infty\subset TL^p(\Omega)$, where $\nu$ is absolutely continuous with respective to the Lebesgue measure. Then $(\nu_n,g_n)\rightarrow (\nu,g)$ in $TL^p(\Omega)$ as $n\rightarrow\infty$ if and only if
  $\nu_n\rightarrow\nu$ weakly as $n\rightarrow\infty$ and
  there exists a sequence of transportation maps $\{T_n\}_{n=1}^\infty$ from $\nu$ to $\nu_n$ such that
  \begin{align*}
    \lim_{n\rightarrow\infty}\int_{\Omega}|x-T_n(x)|^pd\nu(x)=0,
  \end{align*}
  and
  \begin{align*}
    \lim_{n\rightarrow\infty}\int_{\Omega}|g(x)-g_n(T_n(x))|^pd\nu(x)=0.
  \end{align*}
\end{lemma}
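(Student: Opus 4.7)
The plan is to prove the two implications separately, exploiting the map formulation of the $TL^p$ distance that is available because $\nu$ satisfies assumption (A6) (in particular, $\nu$ is absolutely continuous with respect to Lebesgue measure). This reduces the characterization essentially to an unpacking of the definition, modulo one nontrivial fact from optimal transport theory.

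For the sufficiency direction ($\Leftarrow$), suppose a sequence of transportation maps $\{T_n\}_{n=1}^\infty$ is given with $T_{n\sharp}\nu=\nu_n$ and satisfying the two displayed limits. Since $T_n$ is admissible in the infimum defining the map formulation of $d_{TL^p}$, I would directly estimate
\begin{equation*}
d^p_{TL^p}((\nu,g),(\nu_n,g_n))\leq\int_{\Omega}\bigl(|x-T_n(x)|^p+|g(x)-g_n(T_n(x))|^p\bigr)d\nu(x),
\end{equation*}
whose right-hand side tends to zero by assumption. The weak convergence $\nu_n\rightharpoonup\nu$ is not actually needed here, but it also follows automatically: for any $\varphi\in C_b(\Omega)$, the identity $\int \varphi \, d\nu_n=\int \varphi(T_n(x))\, d\nu(x)$ combined with convergence of $T_n$ to the identity in $L^p(\nu)$ (hence in $\nu$-measure) yields $\int\varphi\, d\nu_n\to\int\varphi\, d\nu$ via dominated convergence.

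For the necessity direction ($\Rightarrow$), I would first use the map formulation of $d_{TL^p}$ (valid since $\nu$ is absolutely continuous): the convergence $d_{TL^p}((\nu,g),(\nu_n,g_n))\to 0$ allows one to pick, for each $n$, a transport map $T_n$ with $T_{n\sharp}\nu=\nu_n$ that achieves the infimum up to $1/n$. Splitting the integrand immediately gives the two required limits. For the weak convergence $\nu_n\rightharpoonup\nu$, I would argue exactly as in the previous paragraph. The main obstacle in this direction is not the argument above but the justification that the plan-based and map-based definitions of $d_{TL^p}$ coincide when $\nu$ is absolutely continuous; this is a classical fact from optimal transport (existence of transport maps whenever the source measure is absolutely continuous and the target arbitrary, together with density of graphs of maps inside admissible plans), which the paper imports from \cite{garcia2016continuum}. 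Once this identification is granted, both directions of the characterization follow by elementary manipulations, so the actual content of Lemma \ref{le:2.2} sits in the absolute-continuity hypothesis on $\nu$.
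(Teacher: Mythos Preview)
Your sketch is correct and captures the standard argument: once the equivalence of the plan-based and map-based formulations of $d_{TL^p}$ is granted (valid because $\nu$ is absolutely continuous), both implications reduce to elementary manipulations, and weak convergence of $\nu_n$ follows from $T_n\to Id$ in $\nu$-measure via bounded convergence. Note, however, that the paper does not supply its own proof of Lemma~\ref{le:2.2}; it simply imports the result from \cite{garcia2016continuum}, so there is no in-paper proof to compare against---your write-up is in effect a reconstruction of the argument in that reference.
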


\begin{remark}\label{re:1}
  Since $\mu$ satisfies assumption (A6) and $\mu_n$ is the associated empirical measure, to conclude the convergence of $(\mu_n,g_n)$ to $(\mu,g)$ in $TL^p(\Omega)$ one only needs to verify the convergence of $g_n\circ T_n$ to $g$ in $L^p(\Omega)$.
\end{remark}

We recall a compactness result for nonlocal functionals
\cite{andreu2008nonlocal}.
\begin{lemma}\label{le:2.3}
  {Let $g$ and $\{g_n\}_{n=1}^\infty$ be functions in $L^p(\Omega)$. If $g_n\rightharpoonup g$ in $L^p(\Omega)$}
  and
  \begin{equation*}
    \frac{1}{\epsilon_n^p}\int_\Omega\int_\Omega\eta_{\epsilon_n}(|x-y|)
    |g_n(y)-g_n(x)|^pdydx\leq C,
  \end{equation*}
  for a sequence $\{\epsilon_n\}_{n=1}^\infty$ converging to $0$ as $n\rightarrow\infty$, then $g\in W^{1,p}(\Omega)$ and
  \begin{align*}
    (\eta(z))^{1/p}\chi_\Omega(x+\epsilon_nz)
    \frac{g_n(x+\epsilon_nz)-g_n(x)}{\epsilon_n}\rightharpoonup
    (\eta(z))^{1/p}z\cdot\nabla g(x), \quad \textrm{in  }L^p(\Omega)\times L^p(\mathbb{R}^d).
  \end{align*}
  Moreover, there exists a subsequence $\{g_{n_k}\}_{k=1}^\infty$, such that
  \begin{equation*}
    g_{n_k}\rightarrow g,\quad \textrm{in  }L^p(\Omega).
  \end{equation*}
\end{lemma}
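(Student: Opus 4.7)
The plan is to pass to difference-quotient variables, extract a weak limit, identify it by testing against smooth tensor products, and finally obtain strong $L^p$ subsequential convergence via a Kolmogorov--Riesz argument.

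First, with $y = x + \epsilon_n z$, the hypothesis rewrites as
$$\int_\Omega\int_{\mathbb{R}^d}\eta(|z|)\chi_\Omega(x+\epsilon_n z)\left|\frac{g_n(x+\epsilon_n z)-g_n(x)}{\epsilon_n}\right|^p dzdx \leq C,$$
so that $h_n(x,z):=(\eta(z))^{1/p}\chi_\Omega(x+\epsilon_n z)(g_n(x+\epsilon_n z)-g_n(x))/\epsilon_n$ is bounded in $L^p(\Omega\times\mathbb{R}^d)$ (the compact support of $\eta$ confines $z$ to a ball). I would extract a subsequence, not relabeled, with $h_n\rightharpoonup H$ weakly in $L^p(\Omega\times\mathbb{R}^d)$.

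Next I would identify $H$ by testing against $\varphi(x,z)=\alpha(x)\beta(z)$ with $\alpha\in C_c^\infty(\Omega)$ and $\beta\in C_c(\mathbb{R}^d)$. For $\epsilon_n$ small the cutoff $\chi_\Omega(x+\epsilon_n z)$ is identically one on the support of $\varphi$, and the change of variable $x\mapsto x-\epsilon_n z$ inside the $g_n(x+\epsilon_n z)$ term moves the difference quotient onto $\alpha$:
$$\int_{\Omega\times\mathbb{R}^d} h_n\alpha\beta\,dxdz = \int_{\Omega\times\mathbb{R}^d} g_n(x)(\eta(z))^{1/p}\beta(z)\frac{\alpha(x-\epsilon_n z)-\alpha(x)}{\epsilon_n}\,dzdx.$$
The inner difference quotient converges uniformly on compact sets to $-z\cdot\nabla\alpha(x)$, so combined with $g_n\rightharpoonup g$ in $L^p(\Omega)$ the limit reads
$$\int_{\Omega\times\mathbb{R}^d} H\alpha\beta\,dxdz = -\int_\Omega g(x)\nabla\alpha(x)\cdot\Xi(\beta)\,dx,\qquad \Xi(\beta):=\int_{\mathbb{R}^d}z(\eta(z))^{1/p}\beta(z)\,dz.$$
Choosing $\beta_j$ with $\Xi(\beta_j)=e_j$ turns this into a uniform $L^{p'}$-bound on $\alpha\mapsto\int g\,\partial_j\alpha$, so each $\partial_j g$ lies in $L^p(\Omega)$ and $g\in W^{1,p}(\Omega)$. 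Integration by parts and density of tensor products then force $H(x,z)=(\eta(z))^{1/p}z\cdot\nabla g(x)$ almost everywhere, and uniqueness of the limit upgrades subsequential to full weak convergence.

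Finally, for strong $L^p$ subsequential convergence of $g_n$ I would invoke Kolmogorov--Riesz. Because $\eta$ is nonincreasing with $\eta(1)\geq 1$, the nonlocal bound yields $\int|g_n(x+\epsilon_n z)-g_n(x)|^p dx \leq C\epsilon_n^p$ for $|z|\leq 1$ (integrated where $x+\epsilon_n z\in\Omega$). Telescoping an arbitrary small translation $h$ into $\lceil|h|/\epsilon_n\rceil$ steps of length at most $\epsilon_n$ and using the triangle inequality yields the uniform modulus $\|\tau_h g_n - g_n\|_{L^p(\Omega')}\leq C|h|$ on any interior subdomain $\Omega'\subset\subset\Omega$. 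The main obstacle is controlling this telescoping near $\partial\Omega$ so that all intermediate translates remain in $\Omega$; I would handle it by restricting to $\Omega'\subset\subset\Omega$ and then exhausting $\Omega$ by a diagonal argument. Once the modulus is in place, Kolmogorov--Riesz delivers a strongly convergent subsequence, whose limit necessarily coincides with $g$.
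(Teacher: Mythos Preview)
The paper does not give its own proof of this lemma; it is quoted from \cite{andreu2008nonlocal}. Your treatment of the first two assertions --- passing to difference-quotient variables, extracting a weak limit of $h_n$ in $L^p(\Omega\times\mathbb{R}^d)$, and identifying it by pairing with tensor products $\alpha(x)\beta(z)$ --- is the standard argument and is correct; in particular the duality step that produces $\partial_j g\in L^p$ from a choice of $\beta_j$ with $\Xi(\beta_j)=e_j$ is fine.

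The genuine gap is in your Kolmogorov--Riesz step. The hypothesis only gives the \emph{averaged} bound
\[
\int_{|z|\leq 1}\Bigl(\int_{\{x\in\Omega:\,x+\epsilon_n z\in\Omega\}}|g_n(x+\epsilon_n z)-g_n(x)|^p\,dx\Bigr)\,dz\leq C\epsilon_n^p,
\]
not the pointwise-in-$z$ estimate $\int|g_n(x+\epsilon_n z)-g_n(x)|^p\,dx\leq C\epsilon_n^p$ for every $|z|\leq 1$ that your telescoping uses. Nothing in the hypothesis rules out a single direction $z$ along which the translate cost is large, so chaining $\lceil |h|/\epsilon_n\rceil$ steps of size $\epsilon_n$ in the fixed direction $h/|h|$ does not yield the claimed uniform modulus. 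The boundary issue you flag is secondary; the argument fails already in the interior.

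The standard fix, used in the cited reference (and in Ponce's related compactness results), replaces telescoping by mollification: with any smooth probability kernel $\rho$ supported in the unit ball, Jensen's inequality together with the averaged bound above gives both $\|g_n-\rho_{\epsilon_n}*g_n\|_{L^p(\Omega')}\leq C\epsilon_n$ and $\|\nabla(\rho_{\epsilon_n}*g_n)\|_{L^p(\Omega')}\leq C$ on every $\Omega'\subset\subset\Omega$. Rellich--Kondrachov then gives relative compactness of $\{\rho_{\epsilon_n}*g_n\}$ in $L^p(\Omega')$, which transfers to $\{g_n\}$, and a diagonal exhaustion of $\Omega$ finishes the job.
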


The nonlocal Poincar\'{e}'s inequality follows from Lemma \ref{le:2.3}.  Inequalities of this type have been studied in \cite{andreu2008nonlocal,radu2017nonlocal}.

\begin{lemma}\label{le:2.4}
  Let $\{g_n\}_{n=1}^\infty$ be functions in $L^p(\Omega)$.
  Then
  \begin{equation}\label{eq:le:2.4:1}
    \int_\Omega|g_n-\overline{g}_n|^pdx\leq \frac{C}{\epsilon_n^p}\int_\Omega\int_\Omega\eta_{\epsilon_n}(|x-y|)
    |g_n(y)-g_n(x)|^pdydx,
  \end{equation}
  holds whenever the right-hand side is bounded. Here $\overline{g}_n=\frac{1}{|\Omega|}\int_\Omega g_ndx$ and the constant $C$ depends only on $p$, $d$, and $\Omega$.
\end{lemma}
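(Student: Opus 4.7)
The natural approach is a compactness/contradiction argument that leverages Lemma \ref{le:2.3} directly.

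The plan is to suppose the inequality fails and derive a contradiction. Concretely, I would assume there exists a sequence $\{g_n\}\subset L^p(\Omega)$ and a sequence $\epsilon_n\to 0$ for which
\begin{equation*}
  \int_\Omega|g_n-\overline{g}_n|^p\,dx=1, \qquad \frac{1}{\epsilon_n^p}\int_\Omega\!\int_\Omega\eta_{\epsilon_n}(|x-y|)|g_n(y)-g_n(x)|^p\,dy\,dx\longrightarrow 0,
\end{equation*}
after a normalization. Replacing $g_n$ by $g_n-\overline{g}_n$ (which leaves the double integral invariant since it involves only differences), I may assume $\overline{g}_n=0$ and $\|g_n\|_{L^p(\Omega)}=1$.

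The next step is to unpack Lemma \ref{le:2.3}. The boundedness of the nonlocal seminorm (in fact, its decay to zero) is exactly the hypothesis needed. A change of variables $z=(y-x)/\epsilon_n$ shows that the $L^p(\Omega\times\mathbb{R}^d)$-norm of
\begin{equation*}
  (\eta(z))^{1/p}\chi_\Omega(x+\epsilon_n z)\,\frac{g_n(x+\epsilon_n z)-g_n(x)}{\epsilon_n}
\end{equation*}
coincides up to a harmless factor with the nonlocal quantity on the right-hand side of \eqref{eq:le:2.4:1}, hence vanishes in the limit. Since $\{g_n\}$ is bounded in $L^p(\Omega)$, I can pass to a subsequence with $g_n\rightharpoonup g$ weakly in $L^p(\Omega)$; Lemma \ref{le:2.3} then provides both $g\in W^{1,p}(\Omega)$ and a further subsequence $g_{n_k}\to g$ strongly in $L^p(\Omega)$. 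In particular $\|g\|_{L^p(\Omega)}=1$ and $\overline{g}=0$.

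To conclude, I exploit that the norm of the difference-quotient sequence tends to zero while it converges weakly to $(\eta(z))^{1/p}z\cdot\nabla g(x)$. Lower semicontinuity of the norm under weak convergence forces $(\eta(z))^{1/p}z\cdot\nabla g(x)=0$ a.e.; since $\eta$ is nonincreasing with $\eta(1)\geq 1$, it is strictly positive on a neighborhood of the origin, so $\nabla g\equiv 0$ on $\Omega$. Connectedness of $\Omega$ together with $\overline{g}=0$ gives $g\equiv 0$, contradicting $\|g\|_{L^p(\Omega)}=1$.

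The only delicate point is the transfer between the scaled nonlocal norm and the difference-quotient norm appearing in Lemma \ref{le:2.3}, which is really just the change of variables combined with the support of $\eta$ and the indicator $\chi_\Omega(x+\epsilon_n z)$; the rest is a standard Rellich-type compactness argument. Everything else amounts to routine bookkeeping, so I do not anticipate any essential obstacle beyond ensuring that the normalization step is legitimate (which it is, because the right-hand side depends only on finite differences of $g_n$).
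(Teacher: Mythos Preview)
Your proposal is correct and follows essentially the same contradiction argument as the paper: normalize to mean zero and unit $L^p$-norm, use boundedness to extract a weak limit, apply Lemma~\ref{le:2.3} to get $g\in W^{1,p}(\Omega)$ and the weak convergence of the difference quotients, then use weak lower semicontinuity of the norm to force $\nabla g\equiv 0$ and reach a contradiction. The only cosmetic difference is that the paper computes $\sigma_\eta\int_\Omega|\nabla g|^p\,dx=0$ and invokes the classical Poincar\'e inequality, whereas you argue directly from $\nabla g\equiv 0$ and connectedness; your explicit use of the strong-convergence clause of Lemma~\ref{le:2.3} to pin down $\|g\|_{L^p}=1$ is in fact slightly more transparent than the paper's final line.
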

\begin{proof}
  It suffices to prove \eqref{eq:le:2.4:1} for $g_n$ with $\overline{g}_n=0$. Otherwise, we replace $g_n$ with $g_n-\overline{g}_n$.

  The proof is by contradiction. Were \eqref{eq:le:2.4:1} false, there would exist a sequence of constants $C_n\rightarrow\infty$ with
  \begin{equation*}
    \int_\Omega|g_n|^pdx> C_n\frac{1}{\epsilon_n^p}\int_\Omega\int_\Omega\eta_{\epsilon_n}(|x-y|)
    |g_n(y)-g_n(x)|^pdydx.
  \end{equation*}
  Let
  \begin{equation*}
    h_n=\frac{g_n}{\|g_n\|_{L^p(\Omega)}}.
  \end{equation*}
  Then
  \begin{equation*}
    \int_\Omega h_ndx=0,\quad \|h_n\|_{L^p(\Omega)}=1,
  \end{equation*}
  and
  \begin{equation*}
    \lim_{n\rightarrow\infty}\frac{1}{\epsilon_n^p}\int_\Omega\int_\Omega\eta_{\epsilon_n}(|x-y|)
    |h_n(y)-h_n(x)|^pdydx=0.
  \end{equation*}
  By the $L^p$ boundedness of $h_n$, we find
  a function $h\in L^p(\Omega)$ such that
  \begin{equation*}
    h_n\rightharpoonup h,\quad \textrm{in } L^p(\Omega).
  \end{equation*}
  We apply Lemma \ref{le:2.3} to see that $h\in W^{1,p}(\Omega)$ and
  \begin{align*}
    (\eta(z))^{1/p}\chi_\Omega(x+\epsilon_nz)
    \frac{h_n(x+\epsilon_nz)-h_n(x)}{\epsilon_n}\rightharpoonup
    (\eta(z))^{1/p}z\cdot\nabla h(x), \quad \textrm{in  }L^p(\Omega)\times L^p(\mathbb{R}^d).
  \end{align*}
By the weak lower semi-continuity of the $L^p$ norm,
\begin{align*}
  \sigma_\eta\int_\Omega|\nabla h|^pdx&= \int_\Omega\int_{\mathbb{R}^d}\eta(z)|z\cdot\nabla h(x)|^pdzdx\\
   &\leq\liminf_{n\rightarrow\infty}\frac{1}{\epsilon_n^p}\int_\Omega\int_\Omega\eta_{\epsilon_n}(|x-y|)
  |h_n(y)-h_n(x)|^pdydx=0,
\end{align*}
where $\sigma_\eta=\int_{\mathbb{R}^d}\eta(z)|z_1|^pdz>0$ and $z_1$ is the first coordinate of $z$.
  The classical Poincar\'{e} inequality implies $\int_\Omega|h|^pdx=0$, which contradicts the fact that $\|h_n\|_{L^p(\Omega)}=1$.
\end{proof}

\section{Consistency for the nonlocal and graph Laplacian}
The graph $p$-biharmonic operator has the graph Laplacian as its ingredient.
An analysis of the consistency for the latter is in order. We start with the nonlocal Laplacian since by the transportation map $T_n$ the graph Laplacian can be rewritten as a nonlocal one.

Let us introduce the weighted nonlocal Laplacian
  \begin{equation*}
    \Delta^\eta_{\rho,\varepsilon}\varphi(x)=\frac{1}{\varepsilon^2}\int_\Omega{\eta}_{\varepsilon}
  (|x-y|)(\varphi(y)-\varphi(x))\rho(y)dy, \quad \varepsilon>0,
  \end{equation*}
and the associated weighted continuous Laplacian
  \begin{equation*}
    \Delta_\rho \varphi =\frac{\sigma_\eta}{2\rho}\textrm{div}(\rho^2\nabla\varphi),
  \end{equation*}
for a function $\varphi:\Omega\rightarrow\mathbb{R}$,  where the constant $\sigma_\eta=\int_{\mathbb{R}^d}\eta(|z|)|z_1|^2dz$.

The following lemma states the consistency of $\Delta^\eta_{\rho,\varepsilon}$ and $\Delta_\rho$ as $\varepsilon\rightarrow 0$ for sufficiently smooth functions with homogeneous Neumann boundary conditions.

\begin{lemma}\label{le:3.1}
  Let $\varphi\in C^2(\mathbb{R}^d)$ with $\frac{\partial\varphi}{\partial\vec{n}}=0$ on $\partial\Omega$.
  Then
  \begin{equation*}
    \lim_{\varepsilon\rightarrow 0}\left\|\Delta^\eta_{\rho,\varepsilon}\varphi-\Delta_\rho \varphi\right\|_{L^\infty(\Omega)}=0.
  \end{equation*}
\end{lemma}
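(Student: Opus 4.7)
The natural split is $\Omega = \Omega^{in}_\varepsilon \cup \Omega^{bd}_\varepsilon$ with $\Omega^{in}_\varepsilon = \{x \in \Omega : \mathrm{dist}(x,\partial\Omega) > 2\varepsilon\}$, exploiting that $\eta$ vanishes outside $[0,2]$, so that $\Delta^\eta_{\rho,\varepsilon}\varphi(x)$ only sees values inside $B(x,2\varepsilon)$, which lies entirely in $\Omega$ on $\Omega^{in}_\varepsilon$. The plan is to prove uniform convergence on the interior by a textbook Taylor/symmetry argument, and then close the boundary strip using the Neumann condition.

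For $x \in \Omega^{in}_\varepsilon$, I would change variables $z = (y-x)/\varepsilon$ to obtain
\[
\Delta^\eta_{\rho,\varepsilon}\varphi(x) = \int_{B(0,2)} \eta(|z|)\,\frac{\varphi(x+\varepsilon z)-\varphi(x)}{\varepsilon^2}\,\rho(x+\varepsilon z)\,dz,
\]
then Taylor-expand $\varphi$ to second and $\rho$ to first order (legitimate since $\varphi \in C^2(\mathbb{R}^d)$ and $\rho \in C^1(\overline{\Omega})$). The isotropy identities $\int_{\mathbb{R}^d}\eta(|z|)\,z\,dz = 0$ and $\int_{\mathbb{R}^d}\eta(|z|)\,z_i z_j\,dz = \sigma_\eta \delta_{ij}$ annihilate the $O(1/\varepsilon)$ first moment and collapse the second-order contribution to $\sigma_\eta \nabla\varphi(x)\cdot\nabla\rho(x) + \tfrac{\sigma_\eta \rho(x)}{2}\Delta\varphi(x)$, which is exactly $\Delta_\rho\varphi(x) = \frac{\sigma_\eta}{2\rho}\mathrm{div}(\rho^2\nabla\varphi)$ after expanding the divergence. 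The remainder is $o(1)$ uniformly on $\Omega^{in}_\varepsilon$ by the uniform continuity of $D^2\varphi$ and $\nabla\rho$ on the compact set $\overline{\Omega}$.

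The boundary strip $\Omega^{bd}_\varepsilon$ is the main obstacle: after rescaling, the $z$-integration is over a truncated subset $D_x^\varepsilon \subset B(0,2)$, so the first moment $\int_{D_x^\varepsilon}\eta(|z|)\,z\,dz$ no longer vanishes by symmetry and naively produces a divergent $O(1/\varepsilon)$ contribution. To cure this I would introduce the nearest boundary point $x^\ast \in \partial\Omega$ and the outward normal $\vec{n}(x^\ast)$, and locally flatten $\partial\Omega$ via a $C^2$ diffeomorphism; by tangential symmetry the leftover first moment points along $-\vec{n}(x^\ast)$. The Neumann condition $\nabla\varphi(x^\ast)\cdot\vec{n}(x^\ast)=0$ combined with $\varphi \in C^2$ then gives the estimate $|\nabla\varphi(x)\cdot\vec{n}(x^\ast)| \le \|D^2\varphi\|_\infty\,\mathrm{dist}(x,\partial\Omega) \le 2\|D^2\varphi\|_\infty\,\varepsilon$, which exactly kills the $1/\varepsilon$ scaling. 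What remains is to match the truncated second-order moments (against the local tangential/normal decomposition of $D^2\varphi$) with $\Delta_\rho\varphi(x^\ast)$, using the smoothness of $\partial\Omega$ to replace $\varphi$- and $\rho$-derivatives at $x$ by those at $x^\ast$ modulo $o(1)$.

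I expect the boundary analysis to be the delicate step. The interior computation is essentially textbook, but on $\Omega^{bd}_\varepsilon$ one must track the $\alpha = \mathrm{dist}(x,\partial\Omega)/\varepsilon$-dependence of every truncated moment and rely simultaneously on the Neumann hypothesis and the $C^2$-regularity of $\partial\Omega$ to get a uniform (not merely bounded) $o(1)$ error. Once both pieces are controlled uniformly in $\varepsilon$, taking the supremum over $\Omega^{in}_\varepsilon \cup \Omega^{bd}_\varepsilon$ yields the claimed $L^\infty(\Omega)$ convergence.
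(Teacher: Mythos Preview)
Your interior argument is the same as the paper's, but the organization for the boundary differs, and the difference matters. The paper does \emph{not} split the domain; it splits the \emph{integral}, writing
\[
\Delta^\eta_{\rho,\varepsilon}\varphi(x)=\underbrace{\frac{1}{\varepsilon^2}\int_{\mathbb{R}^d}\eta_\varepsilon(|x-y|)(\varphi(y)-\varphi(x))\rho(y)\,dy}_{I_1}\;-\;\underbrace{\frac{1}{\varepsilon^2}\int_{\mathbb{R}^d\setminus\Omega}\eta_\varepsilon(|x-y|)(\varphi(y)-\varphi(x))\rho(y)\,dy}_{-I_2}.
\]
Because $I_1$ is over the \emph{full} ball, the moment identities $\int\eta(|z|)z\,dz=0$ and $\int\eta(|z|)z_iz_j\,dz=\sigma_\eta\delta_{ij}$ apply at every $x\in\Omega$, giving $I_1=\Delta_\rho\varphi(x)+o(1)$ uniformly. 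The boundary work is then entirely in showing $I_2=o(1)$, for which the paper invokes a boundary expansion of Cort\'azar et al.: the leading $O(1/\varepsilon)$ part of $I_2$ is a normal-derivative term killed by the Neumann hypothesis, and the remainder is controlled by that lemma.

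In your domain-split approach, on $\Omega^{bd}_\varepsilon$ the integration region $D_x^\varepsilon$ is a truncated ball, so the second moments $\int_{D_x^\varepsilon}\eta(|z|)z_iz_j\,dz$ are \emph{not} $\sigma_\eta\delta_{ij}$; they depend on $\alpha=\mathrm{dist}(x,\partial\Omega)/\varepsilon$ and on the local geometry. Your line ``match the truncated second-order moments \ldots\ with $\Delta_\rho\varphi(x^\ast)$'' is therefore the real gap: there is no direct matching, and you would still have to prove that the second-order contribution from the missing cap $B(0,2)\setminus D_x^\varepsilon$ is $o(1)$ uniformly in $\alpha$. That is precisely the content of the $I_2$ estimate, and the paper resolves it by citing the Cort\'azar et al.\ result rather than by a bare-hands moment computation. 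Your first-order argument (Neumann at $x^\ast$ plus $|\nabla\varphi(x)\cdot\vec n(x^\ast)|\le C\varepsilon$) is correct and is exactly the mechanism behind the vanishing of the leading term in $I_2$; what is missing from your outline is an argument (or a reference) for the second-order boundary remainder.
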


\begin{proof}
  For any $x\in\Omega$,
  \begin{align}\label{eq:le:3.1:1}
    \begin{split}
    \Delta^\eta_{\rho,\varepsilon}\varphi(x)=&\frac{1}{\varepsilon^2}\int_{\mathbb{R}^d}{\eta}_{\varepsilon}
  (|x-y|)(\varphi(y)-\varphi(x))\rho(y)dy\\
  &-\frac{1}{\varepsilon^2}\int_{\mathbb{R}^d\backslash\Omega}{\eta}_{\varepsilon}
  (|x-y|)(\varphi(y)-\varphi(x))\rho(y)dy=:I_1+I_2.
    \end{split}
\end{align}
  By a change of variables $y=x+\varepsilon z$ and Taylor's expansion,
  \begin{align*}
    I_1&=\frac{1}{\varepsilon^2}\int_{\mathbb{R}^d}{\eta}_{\varepsilon}
  (|x-y|)(\varphi(y)-\varphi(x))(\rho(y)-\rho(x))dy \\
  &\quad+\frac{1}{\varepsilon^2}\int_{\mathbb{R}^d}{\eta}_{\varepsilon}
  (|x-y|)(\varphi(y)-\varphi(x))\rho(x)dy\\
  &=\int_{\mathbb{R}^d}\eta(|z|)(\nabla\varphi(x)\cdot z)(\nabla\rho(x)\cdot z)dz
  +\frac{1}{2}\int_{\mathbb{R}^d}\eta(|z|)\sum_{i,j=1}^{d}
  z_iz_j\varphi_{x_ix_j}(x)\rho(x)dz +o(1)\\
  &=:I_3+I_4+o(1).
  \end{align*}
  If $i\neq j$,
    \begin{align*}
      \int_{\mathbb{R}^d}\eta(|z|)\left(\varphi_{x_i}(x)z_i\right)\left(\rho_{x_j}(x)z_j\right) dz =0,
    \end{align*}
  which leads to
  \begin{align*}
    I_3&=\int_{\mathbb{R}^d}\eta(|z|)\left(\sum_{i=1}^d\varphi_{x_i}(x)z_i\right)\left(\sum_{i=1}^d\rho_{x_i}(x)z_i\right)dz
      =\int_{\mathbb{R}^d}\eta(|z|)\sum_{i=1}^d\varphi_{x_i}(x)\rho_{x_i}(x)|z_i|^2dz\\
      &=\sigma_\eta\nabla\varphi(x)\cdot\nabla\rho(x).
  \end{align*}
  Similarly, we have
  \begin{equation*}
    I_4
    =\frac{1}{2}\int_{\mathbb{R}^d}\eta(|z|)\sum_{i=1}^{d}
  |z_i|^2\varphi_{x_ix_i}(x)\rho(x)dz
  =\frac{1}{2}\sigma_\eta\rho(x)\Delta\varphi(x).
  \end{equation*}
  Substituting $I_3$ and $I_4$ into $I_1$ to find
  \begin{equation*}
    I_1=\sigma_\eta\nabla\varphi(x)\cdot\nabla\rho(x)+\frac{1}{2}\sigma_\eta\rho(x)\Delta\varphi(x)+o(1)=\Delta_\rho \varphi(x)+o(1).
  \end{equation*}

  To estimate $I_2$, we recall a lemma from \cite{cortazar2008approximate}, which states that for any $x\in\{z\in\Omega : \textrm{dist}(z,\partial\Omega)\leq 2\varepsilon\}$ and small $\varepsilon$,
  \begin{align*}
    \frac{1}{\varepsilon^2}&\int_{\mathbb{R}^d\backslash\Omega}{\eta}_{\varepsilon}
  (|x-y|)(\varphi(y)-\varphi(x))dy\\
  =&\frac{1}{\varepsilon}\int_{\mathbb{R}^d\backslash\Omega}
  {\eta}_{\varepsilon}(|x-y|)\vec{n}(\bar{x})\cdot\frac{(y-x)}{\varepsilon}
  \frac{\partial\varphi}{\partial\vec{n}}(\bar{x}) dy\\
  &+\int_{\mathbb{R}^d\backslash\Omega}
  {\eta}_{\varepsilon}(|x-y|)\sum_{|\beta|=2}\frac{D^\beta\varphi}{2}(\bar{x})
  \left[\left(\frac{y-\bar{x}}{2}\right)^\beta-
  \left(\frac{x-\bar{x}}{2}\right)^\beta\right]dy+o(1),
  \end{align*}
  where $\bar{x}$ is the orthogonal projection of $x$ on $\partial\Omega$.
  The homogeneous Neumann boundary condition of $\varphi$ implies
  \begin{align*}
    |I_2|\leq C\varepsilon^2\int_{\mathbb{R}^d\backslash\Omega}
  {\eta}_{\varepsilon}(|x-y|)dy+o(1).
  \end{align*}
  The proof is finished by substituting it into \eqref{eq:le:3.1:1} and passing to the limit $\varepsilon\rightarrow 0$.
\end{proof}

In the following, we generalize Lemma \ref{le:3.1} to the graph case.
Let $T_n$ be the transportation map given in Lemma \ref{le:2.1}.
 Denote
\begin{equation*}
  \tilde{\varphi}(x)=\varphi(T_n(x)),\quad x\in\Omega,
\end{equation*}
for $\varphi:\Omega_n\rightarrow\mathbb{R}$, and
\begin{equation*}
  \tilde{\eta}_{\varepsilon_n}(x,y)={\eta}_{\varepsilon_n}
  (|T_n(x)-T_n(y)|),\quad x,y\in\Omega.
\end{equation*}
Now the graph Laplacian of $\varphi$ can be rewritten as a nonlocal Laplacian.
Namely,
\begin{align*}
  \Delta_{G_n}\varphi(T_n(x))&=\frac{1}{n\varepsilon_n^2}\sum_{j=1}^n\eta_{\varepsilon_n}
  (|T_n(x)-x_j|)(\varphi(x_j)-\varphi(T_n(x)))\\
  &=\frac{1}{\varepsilon_n^2}\int_\Omega\tilde{\eta}_{\varepsilon_n}
  (x,y)(\tilde{\varphi}(y)-\tilde{\varphi}(x))\rho(y)dy\\
  &=\Delta^{\tilde{\eta}}_{\rho,\varepsilon_n}\tilde{\varphi}(x),
\end{align*}
for any $x\in\Omega$.
\begin{lemma}\label{le:3.2}
Let $\varphi\in C^2(\mathbb{R}^d)$ with $\frac{\partial\varphi}{\partial\vec{n}}=0$ on $\partial\Omega$.
Then
\begin{equation*}
  \lim_{n\rightarrow\infty}\|
  \Delta^{\tilde{\eta}}_{\rho,\varepsilon_n}\tilde{\varphi}
  -\Delta_\rho \varphi\|_{L^\infty(\Omega)}=0.
\end{equation*}
\end{lemma}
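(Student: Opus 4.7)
The plan is to insert the nonlocal Laplacian $\Delta^\eta_{\rho,\varepsilon_n}\varphi$ between the graph operator and the continuum operator, and use the triangle inequality:
\begin{equation*}
\bigl\|\Delta^{\tilde\eta}_{\rho,\varepsilon_n}\tilde\varphi-\Delta_\rho\varphi\bigr\|_{L^\infty(\Omega)}
\le \bigl\|\Delta^{\tilde\eta}_{\rho,\varepsilon_n}\tilde\varphi-\Delta^\eta_{\rho,\varepsilon_n}\varphi\bigr\|_{L^\infty(\Omega)}
+\bigl\|\Delta^\eta_{\rho,\varepsilon_n}\varphi-\Delta_\rho\varphi\bigr\|_{L^\infty(\Omega)}.
\end{equation*}
The second term tends to zero as $n\to\infty$ by Lemma \ref{le:3.1}, since $\varepsilon_n\to 0$. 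So the task reduces to controlling the first term, which measures the perturbation caused by passing from the true arguments $x,y$ to the ``quantized'' arguments $T_n(x),T_n(y)$ both in the kernel and in the function.

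For the first term I would write the integrand pointwise as
\begin{align*}
&\eta_{\varepsilon_n}(|T_n(x)-T_n(y)|)\bigl(\varphi(T_n(y))-\varphi(T_n(x))\bigr)
-\eta_{\varepsilon_n}(|x-y|)\bigl(\varphi(y)-\varphi(x)\bigr)\\
&\quad = \bigl[\eta_{\varepsilon_n}(|T_n(x)-T_n(y)|)-\eta_{\varepsilon_n}(|x-y|)\bigr]
\bigl(\varphi(T_n(y))-\varphi(T_n(x))\bigr)\\
&\qquad{}+\eta_{\varepsilon_n}(|x-y|)\bigl[\bigl(\varphi(T_n(y))-\varphi(T_n(x))\bigr)-\bigl(\varphi(y)-\varphi(x)\bigr)\bigr],
\end{align*}
and estimate each bracket using $\delta:=\|Id-T_n\|_{L^\infty(\Omega)}$, which satisfies $\delta/\varepsilon_n^2\to 0$ by \eqref{eq:Tn}. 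Since $\varphi\in C^2(\mathbb{R}^d)$, the second bracket is $O(\delta)$ uniformly, while the supports of both kernels are contained in $\{|x-y|\le 2\varepsilon_n+2\delta\}$; after integrating against $\rho\,dy$ and dividing by $\varepsilon_n^2$, this contribution is $O(\delta/\varepsilon_n^2)=o(1)$.

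The main obstacle is the kernel-difference bracket, because $\eta$ is only piecewise Lipschitz, so $\eta_{\varepsilon_n}$ can have jumps of size $O(\varepsilon_n^{-d})$ on shells of the form $|x-y|\approx s_k\varepsilon_n$. I would split $\eta=\eta_L+\eta_J$ where $\eta_L$ is Lipschitz and $\eta_J$ is a finite sum of jumps. On the Lipschitz part, $|\eta_{L,\varepsilon_n}(|T_n(x)-T_n(y)|)-\eta_{L,\varepsilon_n}(|x-y|)|\le C\delta/\varepsilon_n^{d+1}$, and combined with $|\varphi(T_n(y))-\varphi(T_n(x))|\le C(\varepsilon_n+\delta)$ on a ball of volume $O(\varepsilon_n^d)$, gives again an $O(\delta/\varepsilon_n^2)$ bound. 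For the jumps, the kernel difference is supported in shells of thickness $O(\delta)$ and radius $O(\varepsilon_n)$, hence volume $O(\varepsilon_n^{d-1}\delta)$, where the kernel is of size $O(\varepsilon_n^{-d})$ and $|\varphi(T_n(y))-\varphi(T_n(x))|=O(\varepsilon_n)$; multiplying and dividing by $\varepsilon_n^2$ again yields $O(\delta/\varepsilon_n^2)=o(1)$. Combining the estimates and invoking Lemma \ref{le:3.1} finishes the proof.
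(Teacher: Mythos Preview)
Your proposal is correct and follows essentially the same approach as the paper. The only cosmetic difference is that the paper inserts two intermediate quantities (yielding three terms $I_1,I_2,I_3$, separating the ``change the function'' step from the ``change the kernel'' step), whereas you insert one and then split the integrand into the two brackets; likewise, the paper treats the jump set of $\eta$ one discontinuity at a time while you package it as $\eta=\eta_L+\eta_J$, but the resulting shell estimate $O(\delta/\varepsilon_n^2)$ is identical.
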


\begin{proof}
Notice that
\begin{align*}
  \|\Delta^{\tilde{\eta}}_{\rho,\varepsilon_n}\tilde{\varphi}-\Delta_\rho \varphi\|_{L^\infty(\Omega)}
  \leq& \left\|\Delta^{\tilde{\eta}}_{\rho,\varepsilon_n}\tilde{\varphi}-\Delta^{\tilde{\eta}}_{\rho,\varepsilon_n}\varphi\right\|_{L^\infty(\Omega)}\\
  &+\left\|\Delta^{\tilde{\eta}}_{\rho,\varepsilon_n}{\varphi}-\Delta^\eta_{\rho,\varepsilon_n}\varphi\right\|_{L^\infty(\Omega)}
  +\left\|\Delta^\eta_{\rho,\varepsilon_n}\varphi-\Delta_\rho \varphi\right\|_{L^\infty(\Omega)}\\
  =:& I_1+I_2+I_3.
\end{align*}
By Lemma \ref{le:3.1}
we only need to show that both $I_1$ and $I_2$ go to zero as $n\rightarrow \infty$.

For any $x,y\in\Omega$,
\begin{equation*}
  \frac{|T_n(x)-T_n(y)|}{\varepsilon_n}\leq 2\Longrightarrow
  \frac{|x-y|}{\tilde{\varepsilon}_n}\leq 2,
\end{equation*}
where $\tilde{\varepsilon}_n=\varepsilon_n+\|Id-T_n\|_{L^\infty(\Omega)}$.
Assumption (A3) and \eqref{eq:Tn} yield
\begin{equation*}
  \int_\Omega\tilde{\eta}_{\varepsilon_n}(x,y)dy
  \leq C\left(\frac{\tilde{\varepsilon}_n}{\varepsilon_n}\right)^d\leq C,
\end{equation*}
for any $x\in\Omega$ and large $n$.
It follows from \eqref{eq:Tn} that
\begin{align*}
  |\Delta^{\tilde{\eta}}_{\rho,\varepsilon_n}&\tilde{\varphi}(x)-\Delta^{\tilde{\eta}}_{\rho,\varepsilon_n}\varphi(x)|\\
  =&\left|\frac{1}{\varepsilon_n^2}\int_\Omega
  \tilde{\eta}_{\varepsilon_n}(x,y)
  (\varphi(T_n(y))-\varphi(T_n(x)))-((\varphi(y)-\varphi(x)))
  \rho(y)dy\right|\\
  \leq& \frac{C}{\varepsilon_n^{2}}
  \left(\esssup_{y\in\Omega}|T_n(y)-y|+\esssup_{x\in\Omega}|T_n(x)-x|\right)  \\
  \leq & \frac{C\|Id-T_n\|_{L^\infty(\Omega)}}{\varepsilon_n^2},
\end{align*}
and hence $I_1\rightarrow 0$.

To estimate $I_2$, we first assume that $\eta$ is Lipschitz. Then
\begin{align}\label{eq:le:3.2:1}
  \begin{split}
  |\Delta^{\tilde{\eta}}_{\rho,\varepsilon_n}&{\varphi}(x)-\Delta^\eta_{\rho,\varepsilon_n}\varphi(x)|\\
  =&\left|\frac{1}{\varepsilon_n^2}\int_\Omega
  \left(\tilde{\eta}_{\varepsilon_n}(x,y)-{\eta}_{\varepsilon_n}(|x-y|)\right)
  (\varphi(y)-\varphi(x))\rho(y)dy  \right|\\
  \leq& \frac{C}{\varepsilon_n^{d+2}}\int_{\Omega\cap B(x,2\tilde{\varepsilon}_n)}
  \left|\frac{|T_n(x)-T_n(y)|}{\varepsilon_n}-\frac{|x-y|}{\varepsilon_n}\right|
  |\varphi(y)-\varphi(x)|dy\\
  \leq & \frac{C\|Id-T_n\|_{L^\infty(\Omega)}}{\varepsilon_n^2},
  \end{split}
\end{align}
from which we have $I_2\rightarrow 0$.

Now we assume that $\eta$ has only one discontinuity point $\alpha > 0$. Let $x\in\Omega$ and
\begin{equation*}
  \hat{\varepsilon}_n=\varepsilon_n-\frac{2}{\alpha}\|Id-T_n\|_{L^\infty(\Omega)},\quad
  \tilde{\varepsilon}_n=\varepsilon_n+\frac{2}{\alpha}\|Id-T_n\|_{L^\infty(\Omega)}.
\end{equation*}
Then
\begin{align}\label{eq:le:3.2:2}
  \begin{split}
  |x-y|< \alpha\hat{\varepsilon}_n \Longrightarrow \frac{|x-y|}{\varepsilon_n}<\alpha, ~\text{ and }~ \frac{|T_n(x)-T_n(y)|}{\varepsilon_n}<\alpha, \\
  |x-y|> \alpha\tilde{\varepsilon}_n \Longrightarrow \frac{|x-y|}{\varepsilon_n}>\alpha, ~\text{ and }~ \frac{|T_n(x)-T_n(y)|}{\varepsilon_n}>\alpha,
  \end{split}
\end{align}
We repeat \eqref{eq:le:3.2:1} and split the integral in the second line into three parts, i.e.,
\begin{equation*}
  \int_\Omega= \int_{|x-y|< \alpha\hat{\varepsilon}_n}
  +\int_{|x-y|> \alpha\tilde{\varepsilon}_n}
  +\int_{\alpha\hat{\varepsilon}_n\leq|x-y|\leq \alpha\tilde{\varepsilon}_n}.
\end{equation*}
Owning to \eqref{eq:le:3.2:2} and the Lipschitz continuous of $\eta$ on $[0,\alpha)$ and $(\alpha,\infty)$, we can handle the first and the
second integral on the right-hand side in the same way as \eqref{eq:le:3.2:1}. For the third integral, we have
\begin{align*}
  \left|\frac{1}{\varepsilon_n^2}\int_{\alpha\hat{\varepsilon}_n\leq|x-y|\leq \alpha\tilde{\varepsilon}_n}
  \left(\tilde{\eta}_{\varepsilon_n}(x,y)-{\eta}_{\varepsilon_n}(|x-y|)\right)
  (\varphi(y)-\varphi(x))\rho(y)dy  \right| \\
  \leq \frac{C}{\varepsilon^{d+1}_n}\int_{\alpha\hat{\varepsilon}_n\leq|x-y|\leq \alpha\tilde{\varepsilon}_n}dy
  \leq \frac{C(\tilde{\varepsilon}_n^{d}-\hat{\varepsilon}_n^{d})}{\varepsilon^{d+1}_n}\leq\frac{C\|Id-T_n\|_{L^\infty(\Omega)}}{\varepsilon^{2}_n}.
\end{align*}
Then $I_2\rightarrow 0$ and the conclusion holds. Clearly, the above proof works for finite many
discontinuity points, i.e., for piecewise Lipschitz $\eta$. This completes the proof.
\end{proof}


\begin{remark}
We used \eqref{eq:Tn} in the proof of Lemma \ref{le:3.2} to
control the error caused by the transportation map.
The necessity of it can be illustrated by the following example.

Let $\Omega=[-\frac{1}{2}, \frac{1}{2}]$ and $\Omega_n=\{\cdots, -\frac{3}{2n}, -\frac{1}{2n}, 0, \frac{1}{n}, \frac{2}{n},\cdots\}$. We have the bound on the transportation distance
\begin{equation}\label{eq:re:1:1}
  \limsup_{n\rightarrow\infty}\frac{\|Id-T_n\|_{L^\infty(\Omega)}}{1/n}\leq C.
\end{equation}
Let $\eta(s)=\chi_{[0,1]}(s)$, $u(x)=x$, and $x_i=0$. Then
\begin{align*}
  \Delta_{G_n}u(x_i)=\frac{1}{n\varepsilon_n^3}\sum_{\{j: |x_j|\leq \varepsilon_n\}}x_j\approx \frac{1}{n\varepsilon_n^3}\frac{ n\varepsilon_n}{2n}=\frac{1}{2n\varepsilon_n^2}
\end{align*}
converges to $0=\Delta u(x_i)$ as $n\rightarrow\infty$ if and only if $\frac{1}{n\varepsilon_n^2}\rightarrow 0$. This and \eqref{eq:re:1:1} imply \eqref{eq:Tn}.
\end{remark}

\section {\textit{A priori} estimates for nonlocal and graph Poisson equations}

It is well-known that the classical Poisson equation admits a $W^{2,p}$ solution for a given $L^p$ datum.
We cannot expect the same property for nonlocal Poisson equations, since
the lack of regularizing effect \cite{chasseigne2006asymptotic,radu2017nonlocal}.
Alternatively, we present here \textit{a priori} estimates of the solution and its nonlocal gradient.

\begin{theorem}\label{th:3.3}
 Let $h\in L^q(\Omega)$, $1<q\leq \infty$. If $u\in L^1(\Omega)$ is a solution of the nonlocal Poisson equation
  \begin{equation*}
    -\Delta^\eta_{\rho,\varepsilon}u(x)=h(x),\quad x\in\Omega,
  \end{equation*}
then $u\in L^q(\Omega)$. Furthermore,
\begin{equation}\label{eq:th:3.3:0a}
  \int_\Omega |u|^qdx+\int_{\Omega}\int_{\Omega}\eta_\varepsilon(|x-y|)\left|\frac{u(y)-u(x)}{\varepsilon}\right|^qdydx
  \leq C\left(\int_\Omega|u|dx\right)^q
  +C\int_\Omega|h|^qdx+C,
\end{equation}
for $1<q< 2$,
\begin{align}\label{eq:th:3.3:0b}
  \begin{split}
  \int_\Omega |u|^qdx+\int_\Omega\int_\Omega\eta_{\varepsilon}(|x-y|)\left|\frac{(|u|^{(q-2)/2}u)(y)-(|u|^{(q-2)/2}u)(x)}{\varepsilon}\right|^2dydx \qquad\\
  \leq C\left(\int_\Omega|u^{q/2}|dx\right)^2 + C\int_\Omega|h|^qdx,
  \end{split}
\end{align}
for $q\geq2$, and
\begin{equation}\label{eq:th:3.3:0c}
  \|u\|_{L^\infty(\Omega)}\leq C \|u\|_{L^1(\Omega)} + C\|h\|_{L^\infty(\Omega)},
\end{equation}
for $q=\infty$.
Here the constant $C$ in \eqref{eq:th:3.3:0a}--\eqref{eq:th:3.3:0c} does not depend on $\varepsilon$.
\end{theorem}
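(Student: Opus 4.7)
My strategy is to test the nonlocal Poisson equation against $|u|^{q-2}u\,\rho$, symmetrize, and invoke the nonlocal Poincar\'e inequality (Lemma~\ref{le:2.4}). Observe first that, after symmetrizing in $x\leftrightarrow y$,
\begin{equation*}
  \int_\Omega \Delta^\eta_{\rho,\varepsilon}u(x)\,\phi(x)\,\rho(x)\,dx
  = -\frac{1}{2\varepsilon^2}\iint_{\Omega\times\Omega}
  \eta_\varepsilon(|x-y|)(u(y)-u(x))(\phi(y)-\phi(x))\rho(y)\rho(x)\,dy\,dx,
\end{equation*}
so that testing the equation with $\phi=|u|^{q-2}u$ converts it into a bilinear form bounded on the right by $C\|\rho\|_{L^\infty}\|h\|_{L^q}\|u\|_{L^q}^{q-1}$ via H\"older. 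Before doing this I would record the pointwise identity $u(x)=d_\varepsilon(x)^{-1}\!\int\eta_\varepsilon u\rho\,dy + \varepsilon^2 h(x)/d_\varepsilon(x)$ with $d_\varepsilon(x)=\int\eta_\varepsilon\rho\,dy$ uniformly bounded below by (A6), which ensures that $u\in L^q$ a priori (with a possibly $\varepsilon$-dependent constant) and legitimates the choice of $|u|^{q-2}u$ as a test function, after a routine truncation in the case $1<q<2$ where the nonlinearity is singular at the origin.

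For $q\ge 2$ I would appeal to the classical Simon inequality $(a-b)(|a|^{q-2}a-|b|^{q-2}b)\ge c_q\bigl(|a|^{(q-2)/2}a-|b|^{(q-2)/2}b\bigr)^2$, so that the bilinear form controls $\varepsilon^{-2}\!\iint\eta_\varepsilon|v(y)-v(x)|^2$ with $v:=|u|^{(q-2)/2}u$. Noting $\|v\|_{L^2}^2=\|u\|_{L^q}^q$ and $|\bar v|\le|\Omega|^{-1}\int|u|^{q/2}$, Lemma~\ref{le:2.4} applied to $v$ yields
\begin{equation*}
  \|u\|_{L^q}^q \le C\Bigl(\int_\Omega|u|^{q/2}dx\Bigr)^{\!2} + C\|h\|_{L^q}\|u\|_{L^q}^{q-1},
\end{equation*}
and a Young-type absorption produces \eqref{eq:th:3.3:0b}, with the gradient term being a byproduct of the same chain.

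For $1<q<2$ I would use the companion Simon inequality $(a-b)(|a|^{q-2}a-|b|^{q-2}b)\ge c_q(|a|+|b|)^{q-2}|a-b|^2$, combined with the algebraic identity $|a-b|^q=[|a-b|^2(|a|+|b|)^{q-2}]^{q/2}(|a|+|b|)^{q(2-q)/2}$ and H\"older with exponents $2/q,\,2/(2-q)$, to obtain
\begin{equation*}
  \varepsilon^{-q}\!\iint\eta_\varepsilon|u(y)-u(x)|^q\,dy\,dx \le C\bigl(\|h\|_{L^q}\|u\|_{L^q}^{q-1}\bigr)^{q/2}\|u\|_{L^q}^{q(2-q)/2} = C\|h\|_{L^q}^{q/2}\|u\|_{L^q}^{q/2}.
\end{equation*}
Lemma~\ref{le:2.4} with exponent $q$ and one more Young's inequality then gives \eqref{eq:th:3.3:0a}, with the additive $C$ arising from handling the mean via $\|u\|_{L^1}$. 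Finally, \eqref{eq:th:3.3:0c} I would obtain by Moser iteration of \eqref{eq:th:3.3:0b} along $q_k=2^{k+1}$, exploiting the form $\|u\|_{L^{q_{k+1}}}\le C_{q_{k+1}}\bigl(\|u\|_{L^{q_k}}+|\Omega|^{1/q_{k+1}}\|h\|_{L^\infty}\bigr)$; the iterated constant stays bounded provided $C_q$ grows only polynomially in $q$. The main obstacle I anticipate is precisely this bookkeeping of the $q$-dependence of $c_q$ and the Poincar\'e/Young constants through the Moser scheme, together with the truncation argument needed to make $|u|^{q-2}u$ a legitimate test function for $1<q<2$; the asymmetric appearance of $\rho$ in $\Delta^\eta_{\rho,\varepsilon}$ is harmless since (A6) gives uniform two-sided bounds.
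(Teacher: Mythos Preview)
Your treatment of the finite-$q$ estimates is essentially the paper's own: test against $|u|^{q-2}u\rho$, symmetrize, apply a Simon-type inequality, and feed the result into the nonlocal Poincar\'e inequality (Lemma~\ref{le:2.4}). For $q\ge 2$ this is identical; for $1<q<2$ you use the inequality $(a-b)(|a|^{q-2}a-|b|^{q-2}b)\ge c_q(|a|+|b|)^{q-2}|a-b|^2$ together with H\"older, whereas the paper uses the regularized version with weight $(1+|a|^2+|b|^2)^{(q-2)/2}$ and a reverse Young's inequality (this is where the extra additive constant in \eqref{eq:th:3.3:0a} originates). Both routes are fine.

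The genuine gap is in your $L^\infty$ argument. A direct Moser iteration of \eqref{eq:th:3.3:0b} along $q_k=2^{k+1}$ does \emph{not} close when $h\not\equiv 0$. Tracking the constants, the Simon inequality contributes a factor $4/q^2$ on the left, so after absorbing the $\delta\|u\|_{L^q}^q$ term one is forced to take $\delta\sim q^{-2}$; the Young remainder then produces a coefficient of order $(q^2)^{q}$ in front of $\int|h|^q$, i.e.\ of order $q^2$ after taking the $q$-th root. Thus the recursion is effectively $\|u\|_{L^{2^{k+1}}}\lesssim \|u\|_{L^{2^k}}+Cq_k^2\|h\|_{L^\infty}$, and the additive terms sum to infinity. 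Polynomial growth of $C_q$ makes $\prod C_{q_k}^{1/q_k}$ converge, but it cannot rescue the \emph{summation} of the inhomogeneous contributions. Note also that the nonlocal Poincar\'e inequality here gives no Sobolev-type gain in integrability, so one cannot fall back on the usual De~Giorgi--Moser mechanism in which an exponent $\chi>1$ absorbs such growth.

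The paper avoids this difficulty altogether by a barrier argument: it introduces a fixed auxiliary function $\phi$ solving $\Delta\phi=2$ with homogeneous Neumann data, uses Lemma~\ref{le:3.1} to ensure $\Delta^\eta_{\varepsilon}\phi\ge 1$ for small $\varepsilon$, and sets $w=u+\|h\|_{L^\infty}\phi$. Then $-\Delta^\eta_{\varepsilon}w\le 0$, one checks that $w_+$ is again a nonlocal subsolution, and the iteration of \eqref{eq:th:3.3:0b} is run on $w_+$ with \emph{zero} right-hand side, where the constants iterate cleanly. This comparison step is the missing idea in your proposal.
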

Since the weight $\rho$ has positive lower and upper bounds, we can replace the Lebesgue measure with $d\mu(x)=\rho(x)dx$ in the proof and simply consider the unweighted nonlocal Laplacian $\Delta^\eta_{\varepsilon}:=\Delta^\eta_{\rho\equiv 1,\varepsilon}$.

\begin{proof}
1.
We rewrite the nonlocal Poisson equation as
\begin{equation*}
  \frac{1}{\varepsilon^2}\int_{\Omega}\eta_\varepsilon(|x-y|)dy u(x)
  =\frac{1}{\varepsilon^2}\int_{\Omega}\eta_\varepsilon(|x-y|)u(y)dy+h(x),\quad x\in\Omega.
\end{equation*}
The conclusion $u\in L^q(\Omega)$ follows from $u\in L^1(\Omega)$ and $h\in L^q(\Omega)$.

  Let $2|u|^{q-2}u\in L^{q/(q-1)}(\Omega)$ be the test function for the nonlocal Poisson equation.
  By  nonlocal integration by parts and Young's inequality,
  \begin{align}\label{eq:th:3.3:1}
    \begin{split}
      &\frac{1}{\varepsilon^2}\int_\Omega\int_\Omega\eta_{\varepsilon}(|x-y|)(u(y)
    -u(x))\left((|u|^{q-2}u)(y)
    -(|u|^{q-2}u)(x)\right)dydx\\
    &=-2\int_\Omega\Delta^{\eta}_{\varepsilon}u
   \cdot |u|^{q-2}udx=2\int_\Omega h|u|^{q-2}udx
   \leq \delta\int_{\Omega}|u|^qdx+
   \frac{C}{\delta^{q-1} }\int_\Omega|h|^qdx,
    \end{split}
  \end{align}
  where $\delta>0$ is a constant.

In the following, we consider the cases $1<q< 2$, $q\geq2$, and $q=\infty$ separately.
It relies on inequalities involving $|s|^{q-2}s$.

2. Let $1<q<2$. We recall the following inequality \cite[Section 10, VII]{lindqvist2017notes}
\begin{equation*}
  (b-a)(|b|^{q-2}b-|a|^{q-2}a)\geq (q-1)(b-a)^2\left(1+|a|^2+|b|^2\right)^{(q-2)/2}, \quad 1<q\leq 2,
\end{equation*}
for any $a,b\in\mathbb{R}$. In both here and \eqref{eq:th:3.3:1} we use the notation $|0|^{q-2}0=0$.
By reverse Young's inequality and nonlocal Poincar\'{e}'s inequality \eqref{eq:le:2.4:1}, the left-hand side of \eqref{eq:th:3.3:1}
\begin{align*}
&\geq \frac{q-1}{\varepsilon^2}\int_\Omega\int_\Omega \eta_{\varepsilon}(|x-y|)(u(y)-u(x))^2\left(1+|u(x)|^2+|u(y)|^2\right)^{(q-2)/2}dydx\\
&\geq {C\delta^{\frac{2-q}{2}}}\int_\Omega\int_\Omega \eta_{\varepsilon}(|x-y|)
\left|\frac{u(y)-u(x)}{\varepsilon}\right|^qdydx\\
&\qquad\qquad\qquad\qquad\qquad-C\delta\int_\Omega\int_\Omega \eta_{\varepsilon}(|x-y|)\left(1+|u(x)|^2+|u(y)|^2\right)^{q/2}dydx \\
&\geq {C\delta^{\frac{2-q}{2}}}\int_\Omega |u-\overline{u}|^qdx-C\delta\int_\Omega|u|^qdx-C\delta \\
&\geq {C\delta^{\frac{2-q}{2}}}\int_\Omega |u|^qdx-{C\delta^{\frac{2-q}{2}}}\left(\int_\Omega|u|dx\right)^q-C\delta\int_\Omega|u|^qdx-C\delta,
\end{align*}
Substituting it into \eqref{eq:th:3.3:1} and choosing a sufficiently small $\delta>0$ such that $C\delta^{\frac{2-q}{2}}> C\delta$ we obtain the $L^q$ boundedness of $u$. The estimate of the nonlocal gradient in \eqref{eq:th:3.3:0a} is contained in the above proof.

3. Assume that $q\geq 2$.
It follows from the inequality \cite[Section 10, V]{lindqvist2017notes}
  \begin{equation*}
    (b-a)(|b|^{q-2}b-|a|^{q-2}a)\geq \frac{4}{q^2}\left||b|^{(q-2)/2}b-|a|^{(q-2)/2}a\right|^2, \quad a,b\in\mathbb{R}, \quad q\geq 2,
  \end{equation*}
  and nonlocal Poincar\'{e}'s inequality \eqref{eq:le:2.4:1} that
  the left-hand side of \eqref{eq:th:3.3:1}
  \begin{align*}
    &\geq \frac{4}{q^2\varepsilon^2}\int_\Omega\int_\Omega\eta_{\varepsilon}(|x-y|)\left|(|u|^{(q-2)/2}u)(y)-(|u|^{(q-2)/2}u)(x)\right|^2dydx    \\
    &\geq C\int_\Omega \left||u|^{(q-2)/2}u-\overline{|u|^{(q-2)/2}u}\right|^2dx
    \geq C\int_\Omega|u|^qdx-C\left(\int_\Omega|u^{q/2}|dx\right)^2.
  \end{align*}
 Again, \eqref{eq:th:3.3:0b} is derived from the above estimate and \eqref{eq:th:3.3:1} with  a small $\delta>0$.

 4.
 If $h\equiv 0$, \eqref{eq:th:3.3:0c} follows from \eqref{eq:th:3.3:0b} directly.
 In fact,
 by choosing $q=2^{k+1}$ in \eqref{eq:th:3.3:0b} and neglecting the nonlocal gradient term, we have
\begin{equation*}
  \left(\int_\Omega |{u}|^{2^{k+1}}dx\right)^{\frac{1}{2^{k+1}}}
  \leq C^{\frac{1}{2^{k}}}
  \left(\int_\Omega |{u}|^{2^{k}}dx\right)^{\frac{1}{2^{k}}}.
\end{equation*}
Iterating repeatedly leads to
\begin{equation*}
  \left(\int_\Omega |{u}|^{2^{k+1}}dx\right)^{\frac{1}{2^{k+1}}}
  \leq C^\alpha
  \int_\Omega |{u}|dx,
\end{equation*}
where $\alpha=\sum_{k=0}^{\infty}\frac{1}{2^k}=2$.
Then \eqref{eq:th:3.3:0c} is obtained by letting  $k\rightarrow\infty$.

5. For a general $h$, we rewrite the nonlocal Poisson equation such that we can use the conclusion of step 4.

Let $\phi$ be a classical solution of
 \begin{align}\label{eq:th:3.3:2}
  \begin{cases}
    \Delta \phi=2, \quad & \textrm{in }\Omega,\\
    \frac{\partial \phi}{\partial\vec{n}}=0,\quad & \textrm{on }\partial\Omega.
  \end{cases}
 \end{align}
Then $\phi$ is uniformly bounded in $\Omega$ and by
Lemma \ref{le:3.1},
\begin{equation*}
  \lim_{\varepsilon\rightarrow 0}\Delta^\eta_{\varepsilon}\phi(x) = \Delta \phi(x)=2,
\end{equation*}
for any $x\in\Omega$.

Define $w=u+\|h\|_{L^\infty(\Omega)} \phi$, we have
\begin{equation*}
  -\Delta^\eta_{\varepsilon}w=-\Delta^\eta_{\varepsilon}u
  -\|h\|_{L^\infty(\Omega)}\Delta^\eta_{\varepsilon}\phi
  =h-\|h\|_{L^\infty(\Omega)}\Delta^\eta_{\varepsilon}\phi\leq 0,
\end{equation*}
for a small $\varepsilon$.
Namely, $w$ is a subsolution of the nonlocal Laplace equation.
For $w_+=\max\{0, w\}$, if $w(x)\geq 0$,
\begin{align*}
  \frac{1}{\varepsilon^2}\int_\Omega\eta_{\varepsilon}(|x-y|)w_+(y)dy
  &\geq \frac{1}{\varepsilon^2}\int_\Omega\eta_{\varepsilon}(|x-y|)w(y)dy \\
  &\geq\frac{1}{\varepsilon^2}\int_\Omega\eta_{\varepsilon}(|x-y|)w(x)dy
  =\frac{1}{\varepsilon^2}\int_\Omega\eta_{\varepsilon}(|x-y|)w_+(x)dy.
\end{align*}
On the other hand, if $w(x)< 0$,
\begin{align*}
  \frac{1}{\varepsilon^2}\int_\Omega\eta_{\varepsilon}(|x-y|)w_+(y)dy
  \geq 0
  =\frac{1}{\varepsilon^2}\int_\Omega\eta_{\varepsilon}(|x-y|)w_+(x)dy.
\end{align*}
This means that
\begin{equation*}
  -\Delta^\eta_{\varepsilon}w_+(x)
  =-\frac{1}{\varepsilon^2}\int_\Omega\eta_{\varepsilon}(|x-y|)(w_+(y)-w_+(x))dy\leq 0,
\end{equation*}
for any $x\in\Omega$ and $w_+$ is also a subsolution.

Now we multiply both sides of the above inequality by $2w_+^{q-1}=2|w_+|^{q-2}w_+$ and
integrate over $\Omega$ to obtain an inequality similar to \eqref{eq:th:3.3:1}, i.e.,
\begin{align*}
  &\frac{1}{\varepsilon^2}\int_\Omega\int_\Omega\eta_{\varepsilon}(|x-y|)(w_+(y)
-w_+(x))\left((|w_+|^{q-2}w_+)(y)
-(|w_+|^{q-2}w_+)(x)\right)dydx\\
&=-2\int_\Omega\Delta^{\eta}_{\varepsilon}w_+
\cdot |w_+|^{q-2}w_+dx\leq 0.
\end{align*}
Repeating step 3-4 for it leads to
\begin{equation*}
  \esssup_{x\in\Omega} w_+
  \leq  C\int_\Omega|w_+|dx,
\end{equation*}
which implies
\begin{equation*}
  \esssup_{x\in\Omega} u
  \leq C \int_\Omega |u|dx + C\|h\|_{L^\infty(\Omega)}.
\end{equation*}

To estimate the lower bound of $u$, we define $w=u-\|h\|_{L^\infty(\Omega)} \phi$ and $w_-=\min\{0, w\}$. Then $w_-$ is a supersolution of the nonlocal Laplace equation.
Choosing $2w_-^{q-1}=2|w_-|^{q-2}w_-$ as the test function and repeating step 3-4
to find
\begin{equation*}
  \esssup_{x\in\Omega} -u
  \leq C \int_\Omega |u|dx + C\|h\|_{L^\infty(\Omega)}.
\end{equation*}
This completes the proof of \eqref{eq:th:3.3:0c}.
\end{proof}


Similar results hold for the graph Poisson equation.

\begin{corollary}\label{th:3.4}
  Let $u_n$ be a solution of the graph Poisson equation
  \begin{equation*}
    -\Delta_{G_n}u_n(x_i)=h_n(x_i),\quad x_i\in\Omega_n.
  \end{equation*}
Then there exists a nonincreasing and smooth kernel $\zeta: [0,\infty)\rightarrow[0,\infty)$ with compact support, such that
\begin{align}\label{eq:th:3.4:1}
  \begin{split}
  \int_\Omega |\tilde{u}_n|^qdx+\int_{\Omega}\int_{\Omega}\zeta_{\varepsilon_n}(|x-y|)\left|\frac{\tilde{u}_n(y)-\tilde{u}_n(x)}{\varepsilon_n}\right|^qdydx
  \leq C\left(\int_\Omega|\tilde{u}_n|dx\right)^q \\
  +C\int_\Omega|\tilde{h}_n|^qdx+C,
  \end{split}
\end{align}
for $1<q< 2$,
\begin{align}\label{eq:th:3.4:2}
  \begin{split}
  \int_\Omega |\tilde{u}_n|^qdx
  +\int_\Omega\int_\Omega\zeta_{\varepsilon_n}(|x-y|)\left|\frac{(|\tilde{u}_n|^{(q-2)/2}\tilde{u}_n)(y)-(|\tilde{u}_n|^{(q-2)/2}\tilde{u}_n)(x)}{\varepsilon_n}\right|^2dydx \\
  \leq C\left(\int_\Omega|\tilde{u}_n^{q/2}|dx\right)^2 +C\int_\Omega|\tilde{h}_n|^qdx,
  \end{split}
\end{align}
for $q\geq2$, and
\begin{equation}\label{eq:th:3.4:2a}
  \|\tilde{u}_n\|_{L^\infty(\Omega)}\leq C \|\tilde{u}_n\|_{L^1(\Omega)} + C\|\tilde{h}_n\|_{L^\infty(\Omega)},
\end{equation}
for $q=\infty$.
Here $\zeta_{\varepsilon_n}(s)=\frac{1}{\varepsilon_n^d}\zeta(s/\varepsilon_n)$, $\tilde{u}_n=u\circ T_n$, and
the constant $C$ in \eqref{eq:th:3.4:1}--\eqref{eq:th:3.4:2a} does not depend on $n$ and $\varepsilon_n$.
\end{corollary}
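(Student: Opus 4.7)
The plan is to pull the graph Poisson equation back to the continuous domain via the transportation map $T_n$ and then imitate the proof of Theorem \ref{th:3.3}, the only real adjustment being that the pulled-back kernel $\tilde{\eta}_{\varepsilon_n}(x,y)=\eta_{\varepsilon_n}(|T_n(x)-T_n(y)|)$ is not of the form $\eta_{\varepsilon_n}(|x-y|)$, so the nonlocal Poincar\'e inequality (Lemma \ref{le:2.4}) cannot be applied directly to it. Setting $\tilde{u}_n=u_n\circ T_n$ and $\tilde{h}_n=h_n\circ T_n$, the identity
\begin{equation*}
  \Delta_{G_n} u_n(T_n(x)) = \Delta^{\tilde{\eta}}_{\rho,\varepsilon_n}\tilde{u}_n(x)
\end{equation*}
from Section 3 gives $-\Delta^{\tilde{\eta}}_{\rho,\varepsilon_n}\tilde{u}_n=\tilde{h}_n$ on $\Omega$. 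Since $\tilde{\eta}_{\varepsilon_n}$ is symmetric in $(x,y)$, testing against $2|\tilde{u}_n|^{q-2}\tilde{u}_n$ and integrating against $d\mu(x)=\rho(x)dx$ yields, exactly as in \eqref{eq:th:3.3:1},
\begin{equation*}
  \frac{1}{\varepsilon_n^2}\iint \tilde{\eta}_{\varepsilon_n}(x,y)(\tilde{u}_n(y)-\tilde{u}_n(x))\bigl((|\tilde{u}_n|^{q-2}\tilde{u}_n)(y)-(|\tilde{u}_n|^{q-2}\tilde{u}_n)(x)\bigr)\rho(x)\rho(y)\,dydx \leq \delta\!\int\!|\tilde{u}_n|^q\rho dx+\frac{C}{\delta^{q-1}}\!\int\!|\tilde{h}_n|^q\rho dx,
\end{equation*}
and the algebraic inequalities cited from Lindqvist apply verbatim.

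The crux is replacing $\tilde{\eta}_{\varepsilon_n}$ by an honest convolution kernel on the left of this inequality. I will pick a smooth nonincreasing $\zeta:[0,\infty)\to[0,\infty)$ supported in $[0,R]$ with $R$ chosen so that $\eta(R+\delta_0)>0$ for some $\delta_0>0$, and with $\|\zeta\|_\infty\le\eta(R+\delta_0)$. By \eqref{eq:Tn} we have $2\|Id-T_n\|_{L^\infty}\le\delta_0\varepsilon_n$ for large $n$, so whenever $|x-y|\le R\varepsilon_n$,
\begin{equation*}
  |T_n(x)-T_n(y)| \le |x-y|+2\|Id-T_n\|_{L^\infty} \le (R+\delta_0)\varepsilon_n,
\end{equation*}
and the monotonicity of $\eta$ forces $\tilde{\eta}_{\varepsilon_n}(x,y)\ge \varepsilon_n^{-d}\eta(R+\delta_0)$. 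For $|x-y|>R\varepsilon_n$ the kernel $\zeta_{\varepsilon_n}(|x-y|)$ vanishes, so altogether
\begin{equation*}
  \zeta_{\varepsilon_n}(|x-y|) \le C\,\tilde{\eta}_{\varepsilon_n}(x,y), \qquad x,y\in\Omega,
\end{equation*}
for large $n$ with $C$ independent of $n$. With this pointwise comparison and the positive bounds on $\rho$, the nonlocal Poincar\'e inequality (Lemma \ref{le:2.4}) applied to the kernel $\zeta$ controls $\int|\tilde{u}_n-\overline{\tilde{u}}_n|^q dx$ (resp.\ $\int||\tilde{u}_n|^{(q-2)/2}\tilde{u}_n-\overline{|\tilde{u}_n|^{(q-2)/2}\tilde{u}_n}|^2dx$) by the left-hand side above. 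Steps 2 and 3 of the proof of Theorem \ref{th:3.3} then go through unchanged and deliver \eqref{eq:th:3.4:1} and \eqref{eq:th:3.4:2}, with the nonlocal gradient term automatically emerging in terms of $\zeta_{\varepsilon_n}$ rather than $\tilde{\eta}_{\varepsilon_n}$.

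For the $L^\infty$ estimate \eqref{eq:th:3.4:2a} I will follow step 5 of Theorem \ref{th:3.3}. Pick $\phi\in C^2(\overline{\Omega})$ solving $\Delta_\rho\phi=2$ with homogeneous Neumann data, which is bounded and smooth by standard elliptic theory; Lemma \ref{le:3.2} gives $\Delta^{\tilde{\eta}}_{\rho,\varepsilon_n}\tilde{\phi}\to 2$ uniformly. Then $\tilde{w}_n=\tilde{u}_n+\|\tilde{h}_n\|_{L^\infty}\tilde{\phi}$ satisfies $-\Delta^{\tilde{\eta}}_{\rho,\varepsilon_n}\tilde{w}_n\le 0$ for large $n$, its positive part $(\tilde{w}_n)_+$ remains a subsolution by the same pointwise comparison argument, and the Moser iteration built from \eqref{eq:th:3.4:2} at $q=2^{k+1}$ yields $\|\tilde{w}_n{}_+\|_{L^\infty}\le C\|\tilde{w}_n{}_+\|_{L^1}$; applying the analogous argument to $-\tilde{u}_n$ completes the bound. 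The main obstacle is really this kernel replacement, which is clean once one commits to producing a fixed smooth $\zeta$ whose support is strictly inside the support of $\eta$ so that the $\|Id-T_n\|_{L^\infty}\ll\varepsilon_n$ error from \eqref{eq:Tn} cannot push us past the cutoff; the remaining work is essentially bookkeeping.
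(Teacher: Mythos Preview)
Your proposal is correct and follows essentially the same line as the paper's proof. The paper also pulls the equation back via $T_n$, tests against $2|u_n|^{q-2}u_n$, and constructs a smooth nonincreasing $\zeta$ dominated pointwise by $\tilde{\eta}_{\varepsilon_n}$ so that Lemma~\ref{le:2.4} applies; it exploits the normalization $\eta(1)\ge 1$, $\eta(2)=0$ in (A3) to take $\zeta$ supported in $[0,1/2]$ with $\zeta(0)<1$, whereas you phrase the same idea with general parameters $R,\delta_0$. For the $L^\infty$ bound the paper simply says ``repeating the proof of Theorem~\ref{th:3.3}''; your explicit replacement of Lemma~\ref{le:3.1} by Lemma~\ref{le:3.2} for the barrier $\phi$ is exactly the adaptation this entails.
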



\begin{proof}
  By assumption (A3), we have $\eta(1)\geq 1$ and $\eta(2)=0$.
  Let $\zeta: [0,\infty)\rightarrow[0,\infty)$ be a nonincreasing and smooth function with $\zeta(0)< 1$ and $\zeta(\frac{1}{2})=0$.
  For any $x,y\in\Omega$,
  \begin{equation*}
    \frac{|x-y|}{\varepsilon_n}\leq \frac{1}{2}
    \Longrightarrow
    \frac{|T_n(x)-T_n(y)|}{\varepsilon_n}
    \leq \frac{|x-y|+2\|Id-T_n\|_{L^\infty(\Omega)}}{\varepsilon_n}\leq  1,
  \end{equation*}
  for large $n$.
  Consequently,
\begin{equation*}
  \zeta_{\varepsilon_n}(|x-y|)=\frac{1}{\varepsilon_n^d}\zeta\left(\frac{|x-y|}{\varepsilon_n}\right)\leq \frac{1}{\varepsilon_n^d}\eta\left(\frac{|T_n(x)-T_n(y)|}{\varepsilon_n}\right)=:\tilde{\eta}_{\varepsilon_n}(x,y), \quad x,y\in\Omega.
\end{equation*}
Let $2|u_n(x_i)|^{q-2}u_n(x_i)$ be the test function for the graph Poisson equation, namely
\begin{equation*}
  -2\frac{1}{n}\sum_{i=1}^{n}\Delta_{G_n}u_n(x_i)|u_n(x_i)|^{q-2}u_n(x_i)
  =2\frac{1}{n}\sum_{i=1}^{n}h_n(x_i)|u_n(x_i)|^{q-2}u_n(x_i).
\end{equation*}
 Then the left-hand side
\begin{align*}
  &=-2\int_\Omega\Delta^{\tilde{\eta}}_{\rho,\varepsilon_n}\tilde{u}_n \cdot |\tilde{u}_n|^{q-2}\tilde{u}_n \rho dx \\
  &=\frac{1}{\varepsilon_n^2}\int_\Omega\int_\Omega\tilde{\eta}_{\varepsilon_n}(x,y)(\tilde{u}_n(y)-\tilde{u}_n(x))\\
  &\qquad\qquad\qquad\qquad\cdot\left((|\tilde{u}_n|^{q-2}\tilde{u}_n)(y)
  -(|\tilde{u}_n|^{q-2}\tilde{u}_n)(x)\right)\rho(y)\rho(x)dydx\\
  &\geq
  \frac{1}{\varepsilon_n^2}\int_\Omega\int_\Omega\zeta_{\varepsilon_n}(|x-y|)(\tilde{u}_n(y)-\tilde{u}_n(x))\\
  &\qquad\qquad\qquad\qquad\cdot\left((|\tilde{u}_n|^{q-2}\tilde{u}_n)(y)
  -(|\tilde{u}_n|^{q-2}\tilde{u}_n)(x)\right)\rho(y)\rho(x)dydx,
\end{align*}
and the right-hand side
\begin{equation*}
  =2\int_\Omega \tilde{h}_n|\tilde{u}_n|^{q-2}\tilde{u}_n\rho dx.
\end{equation*}
Combining both we arrive at an inequality similar to \eqref{eq:th:3.3:1}.
The conclusions are obtained by  repeating the proof of Theorem \ref{th:3.3}.
\end{proof}

\section{Discrete to continuum convergence}
In this section, we combine the results established in Section 3 and Section 4 to study the continuum limit of Equation \eqref{P}.
Before that, we state the unique solvability of it on a fixed graph $G_n$.

\begin{lemma}\label{th:4.1}
  Let $n> 0$ and $\varepsilon_n>0$ be fixed.
  Equation \eqref{P} admits a unique solution $u_n$ with
  \begin{equation}\label{eq:th3.3}
  \frac{1}{n}\sum_{i=1}^n|\Delta_{G_n}u_n(x_i)|^p
  +\frac{\lambda}{2n}\sum_{i=1}^n|u_n(x_i)|^2\leq C.
\end{equation}
If $\lambda>0$ is small, we further have
\begin{equation}\label{eq:th3.3a}
  \max_{x_i\in\Omega_n}|u_n(x_i)|+\max_{x_i\in\Omega_n}|\Delta_{G_n}u_n(x_i)|
  \leq C.
\end{equation}
Here both constants on the right-hand side do not depend on $n$ and $\varepsilon_n$.
\end{lemma}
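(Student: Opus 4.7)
The plan is to treat Lemma 4.1 as a variational problem on the finite-dimensional space $\mathbb{R}^{V_n}$, then derive the energy bound from an $L^2$-style test, and finally obtain the $L^\infty$ bound by viewing \eqref{P} as a system of two graph Poisson equations and iterating Corollary \ref{th:3.4}.

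First I would recast \eqref{P} as the Euler--Lagrange equation of the strictly convex functional
\begin{equation*}
  J(u)=\frac{1}{pn}\sum_{i=1}^n|\Delta_{G_n}u(x_i)|^p+\frac{\lambda}{2n}\sum_{i=1}^n|u(x_i)-f(x_i)|^2
\end{equation*}
on $\mathbb{R}^{V_n}$, using self-adjointness of $\Delta_{G_n}$ (which follows from symmetry of the weights $W_{i,j}$) to produce the nonlinear fourth-order term. Since $\lambda>0$ and $f$ is bounded, the quadratic fidelity term alone makes $J$ coercive, while the $p$-term plus the strict convexity of the $L^2$ part ensures strict convexity. The direct method of the calculus of variations then gives a unique minimizer $u_n$, which automatically solves \eqref{P}.

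Next, for \eqref{eq:th3.3}, I would simply test \eqref{P} against $u_n$ itself (i.e.\ multiply by $u_n(x_i)/n$ and sum). Using the self-adjointness of $\Delta_{G_n}$ to perform a graph ``integration by parts'' yields
\begin{equation*}
  \frac{1}{n}\sum_{i=1}^n|\Delta_{G_n}u_n(x_i)|^p+\frac{\lambda}{n}\sum_{i=1}^nu_n(x_i)^2=\frac{\lambda}{n}\sum_{i=1}^nf(x_i)u_n(x_i),
\end{equation*}
and absorbing half of the $u_n^2$ term on the right via Young's inequality gives \eqref{eq:th3.3}, with $C$ depending only on $\|f\|_\infty$ and $|\Omega|$.

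For the $L^\infty$ bound \eqref{eq:th3.3a}, I would split \eqref{P} into the pair
\begin{equation*}
  -\Delta_{G_n}u_n=-w_n,\qquad -\Delta_{G_n}v_n=\lambda(f-u_n),
\end{equation*}
where $w_n:=\Delta_{G_n}u_n$ and $v_n:=|w_n|^{p-2}w_n$, so $|\tilde w_n|=|\tilde v_n|^{1/(p-1)}$. From \eqref{eq:th3.3} one immediately gets $\|\tilde u_n\|_{L^1}\leq C$ and $\|\tilde w_n\|_{L^p}\leq C$, hence $\|\tilde v_n\|_{L^{p/(p-1)}}\leq C$ and, since $\Omega$ is bounded, $\|\tilde v_n\|_{L^1}\leq C$. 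Applying \eqref{eq:th:3.4:2a} of Corollary \ref{th:3.4} to each equation with $q=\infty$ then yields
\begin{equation*}
  \|\tilde v_n\|_{L^\infty}\leq C_1+C_2\lambda\,\|\tilde u_n\|_{L^\infty},\qquad \|\tilde u_n\|_{L^\infty}\leq C_3+C_4\|\tilde v_n\|_{L^\infty}^{1/(p-1)}.
\end{equation*}
Substituting one into the other produces the scalar inequality
\begin{equation*}
  \|\tilde u_n\|_{L^\infty}\leq C_3+C_4\bigl(C_1+C_2\lambda\,\|\tilde u_n\|_{L^\infty}\bigr)^{1/(p-1)},
\end{equation*}
which for $\lambda$ small enough forces $\|\tilde u_n\|_{L^\infty}$ to stay in the smallest root of the corresponding fixed-point equation; from that, $\|\tilde v_n\|_{L^\infty}$ and $\|\tilde w_n\|_{L^\infty}$ are bounded. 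Since $T_n$ is surjective (up to $\mu$-null sets) onto $\Omega_n$, these $L^\infty(\Omega)$ bounds translate directly to $\max_{x_i}|u_n(x_i)|$ and $\max_{x_i}|\Delta_{G_n}u_n(x_i)|$, yielding \eqref{eq:th3.3a}. The main obstacle is the case $1<p<2$, where $1/(p-1)>1$ makes the iteration superlinear: closing it requires smallness of $\lambda$ (explaining the hypothesis), and I would verify carefully that the constants in Corollary \ref{th:3.4} are independent of $n$ so that the smallness threshold for $\lambda$ is uniform in $n$.
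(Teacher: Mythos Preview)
Your proposal is correct and follows essentially the same approach as the paper: variational existence and uniqueness via the strictly convex energy, the energy bound \eqref{eq:th3.3} by testing the equation against $u_n$ and using self-adjointness of $\Delta_{G_n}$, and the $L^\infty$ bound \eqref{eq:th3.3a} by splitting into the two graph Poisson equations for $u_n$ and $v_n=|\Delta_{G_n}u_n|^{p-2}\Delta_{G_n}u_n$ and applying \eqref{eq:th:3.4:2a} of Corollary~\ref{th:3.4} to each. The paper presents the closing step as $\max|u_n|^{p-1}\leq C+C\lambda\max|u_n|$ (i.e.\ your final inequality raised to the power $p-1$), but the substance and the role of the smallness of $\lambda$ are identical.
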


The existence of a unique minimizer for the variational model associated to equation \eqref{P} has been shown in \cite{el2020discrete}. We give a  sketched proof here.
\begin{proof}

1.
  Let us consider the functional
  \begin{align*}
  E_n(u)=\frac{1}{p}\sum_{i=1}^n|\Delta_{G_n}u(x_i)|^p
  +\frac{\lambda}{2}\sum_{i=1}^n|u(x_i)-f(x_i)|^2,
  \end{align*}
  and introduce the vector space
    \begin{equation*}
    V:=\left\{v: \Omega_n\rightarrow \mathbb{R}\right\}
  \end{equation*}
  equipped with the $\ell^p$-norm
  \begin{equation*}
    \|v\|=\left(\sum_{i=1}^n|v(x_i)|^p\right)^{1/p}.
  \end{equation*}
  It is straightforward to verify that $E_n(u)$ is coercive, lower semi-continuous, and strictly convex on $V$
  and admits a unique minimizer $u_n$, which is also a solution of equation \eqref{P}. The uniqueness of the solution follows from the monotonicity of $|s|^{p-2}s$.

  2.
To obtain the uniform estimate \eqref{eq:th3.3}, we multiply both sides of equation \eqref{P} by $u_n(x_i)/n$
 and sum over $i=1,\cdots,n$ to find
\begin{align*}
  \frac{1}{n}\sum_{i=1}^n
  \Delta_{G_n}(|\Delta_{G_n}u_{n}|^{p-2}\Delta_{G_n}u_{n})(x_i)
  &\cdot u_n(x_i)\\
  &+\frac{\lambda}{n}\sum_{i=1}^n(u_n(x_i)-f(x_i))\cdot u_n(x_i)=0.
\end{align*}
Since for any functions $u, \varphi: \Omega_n\rightarrow\mathbb{R}$,
\begin{equation*}
  \sum_{i=1}^n\Delta_{G_n}u(x_i)\varphi(x_i)=
  \sum_{i=1}^nu(x_i)\Delta_{G_n}\varphi(x_i),
\end{equation*}
we see from Young's inequality that
\begin{align*}
  \frac{1}{n}\sum_{i=1}^n|\Delta_{G_n}u_{n}(x_i)|^p
  +\frac{\lambda}{2n}\sum_{i=1}^n|u_{n}(x_i)|^2
  \leq \frac{\lambda}{2n}\sum_{i=1}^n|f(x_i)|^2.
\end{align*}

3. To prove \eqref{eq:th3.3a},
we introduce
\begin{equation*}
  v_n(x_i)=|\Delta_{G_n}u_{n}(x_i)|^{p-2}\Delta_{G_n}u_{n}(x_i),
  \quad x_i\in\Omega_n,
\end{equation*}
and rewrite equation \eqref{P} as
\begin{equation*}
  -\Delta_{G_n}v_{n}(x_i) + \lambda(f(x_i)-u_n(x_i))=0,\quad x_i\in\Omega_n.
\end{equation*}
By the virtue of \eqref{eq:th:3.4:2a},
\begin{equation*}
  \max_{x_i\in\Omega_n}|\Delta_{G_n}u_{n}(x_i)|^{p-1}=\max_{x_i\in\Omega_n}|v_n(x_i)|\leq
  C\frac{1}{n}\sum_{i=1}^{n}|v_n(x_i)|
  +C\lambda
  \max_{x_i\in\Omega_n}|f(x_i)-u_n(x_i)|.
\end{equation*}
We utilize \eqref{eq:th:3.4:2a} again to obtain
\begin{align*}
  &\max_{x_i\in\Omega_n}|u_n(x_i)|^{p-1}
  \leq C\left(\frac{1}{n}\sum_{i=1}^{n}|u_n(x_i)|\right)^{p-1}
  +C\max_{x_i\in\Omega_n}|\Delta_{G_n}u_n(x_i)|^{p-1}\\
  &\leq C\left(\frac{1}{n}\sum_{i=1}^{n}|u_n(x_i)|\right)^{p-1}
  + \frac{C}{n}\sum_{i=1}^{n}|\Delta_{G_n}u_n(x_i)|^{p-1}+C\lambda\max_{x_i\in\Omega_n}|f(x_i)-u_n(x_i)|.
\end{align*}
Estimate \eqref{eq:th3.3} indicates the boundedness of the first term and the second term on the right-hand side.
If $\lambda>0$ is small such that $$\max_{x_i\in\Omega_n}|u_n(x_i)|^{p-1}>C\lambda \max_{x_i\in\Omega_n}|u_n(x_i)|,$$
 we have
the conclusion \eqref{eq:th3.3a}.
\end{proof}

Let us now discuss the asymptotic behavior for the solution of equation \eqref{P} as $n\rightarrow\infty$.
We shall see that the solution $u_n$ converges to a weak solution of the  weighted $p$-biharmonic equation
\begin{align}\label{CP}
  \begin{cases}
    -\Delta_\rho(|\Delta_\rho u|^{p-2}\Delta_\rho u)+\lambda (f-u)=0, & \mbox{in } \Omega, \\
    \frac{\partial(|\Delta_\rho u|^{p-2}\Delta_\rho u)}{\partial\vec{n}}=0, & \mbox{on } \partial\Omega, \\
    \frac{\partial u}{\partial\vec{n}}=0, & \mbox{on } \partial\Omega.
  \end{cases}
\end{align}
A function $u\in W$, where
\begin{equation*}
  W=\{v\in W^{2,p}(\Omega),~~ \frac{\partial v}{\partial\vec{n}}=0 ~~\textrm{on}~~ \partial\Omega\},
\end{equation*}
is called a weak solution of \eqref{CP} if it
 satisfies
\begin{equation}\label{CP11}
  -\int_\Omega|\Delta_\rho u|^{p-2}\Delta_\rho u \Delta_\rho \varphi \rho dx+\lambda\int_\Omega (f-u)\varphi \rho dx=0,
\end{equation}
for any $\varphi\in W$.
It can be verified by the direct method in the calculus of variations, as did in the proof of Lemma \ref{th:4.1}, that
\begin{equation*}
  \frac{1}{p}\int_\Omega|\Delta_\rho u|^p \rho dx+\frac{\lambda}{2}\int_\Omega |u-f|^2 \rho dx
\end{equation*}
admits a unique minimizer $u\in W$, which is also the unique weak solution of equation \eqref{CP}.

\begin{theorem}\label{th:4.2}
  Let $u_n$ be a solution of the $p$-biharmonic equation \eqref{P} on graph $G_n$ and $u$ be a weak solution of equation \eqref{CP}. Then
   \begin{equation}\label{eq:th:3.4:00a}
     \lim_{n\rightarrow\infty}(\mu_n,u_n)=(\mu,u),\quad \textrm{in}~~ TL^p(\Omega),
   \end{equation}
   almost surely.
\end{theorem}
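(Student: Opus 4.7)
Working on a realisation of $\{x_i\}_{i=1}^\infty$ on which Lemma \ref{le:2.1} (and hence \eqref{eq:Tn}) holds, and invoking Remark \ref{re:1}, it suffices to show $\tilde u_n := u_n\circ T_n \to u$ in $L^p(\Omega)$. The strategy is to view \eqref{P} as a coupled pair of graph Poisson equations in the auxiliary variable $v_n(x_i) := |\Delta_{G_n}u_n(x_i)|^{p-2}\Delta_{G_n}u_n(x_i)$, namely
\[
-\Delta_{G_n}v_n = \lambda(u_n - f), \qquad \Delta_{G_n}u_n = |v_n|^{(2-p)/(p-1)}v_n,
\]
apply Corollary \ref{th:3.4} to each to gain compactness, extract a subsequential limit by Lemma \ref{le:2.3}, and pass to the limit in the weak form of \eqref{P} via the consistency Lemma \ref{le:3.2}.

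Lemma \ref{th:4.1} supplies uniform bounds on $u_n$, $\Delta_{G_n}u_n$, and hence on $v_n$ (the $L^\infty$ estimate \eqref{eq:th3.3a} in the small-$\lambda$ regime; otherwise the $L^p$ energy bound \eqref{eq:th3.3}, which is enough after matching the exponent $q$ in Corollary \ref{th:3.4}). Applying Corollary \ref{th:3.4} to the $v_n$-equation produces uniform nonlocal Dirichlet-type bounds on $\tilde v_n := v_n\circ T_n$; Lemma \ref{le:2.3} then delivers, along a subsequence, $\tilde v_n \to \bar v$ strongly in $L^{p/(p-1)}(\Omega)$ and pointwise a.e., with $\bar v$ in a nonlocal Sobolev space. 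Applying Corollary \ref{th:3.4} to the $u_n$-equation --- whose right-hand side $|v_n|^{(2-p)/(p-1)}v_n$ is uniformly bounded since $v_n$ is --- analogously yields $\tilde u_n \to \bar u$ strongly in $L^p(\Omega)$ and pointwise a.e., with $\bar u\in W^{1,p}(\Omega)$. Continuity of $s\mapsto |s|^{(2-p)/(p-1)}s$, together with the uniform $L^\infty$ bound and dominated convergence, then gives $|\tilde v_n|^{(2-p)/(p-1)}\tilde v_n \to |\bar v|^{(2-p)/(p-1)}\bar v$ strongly in $L^p(\Omega)$.

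To pass to the limit, fix $\varphi\in C^2(\overline\Omega)$ with $\partial_{\vec n}\varphi = 0$ on $\partial\Omega$. Testing \eqref{P} against $\varphi(x_i)/n$ and using the discrete self-adjointness of $\Delta_{G_n}$ yields
\[
-\int_\Omega \tilde v_n\,(\Delta_{G_n}\varphi)\circ T_n\,\rho\,dx + \lambda\int_\Omega (f\circ T_n - \tilde u_n)(\varphi\circ T_n)\,\rho\,dx = 0,
\]
and the second Poisson identity similarly gives
\[
\int_\Omega |\tilde v_n|^{(2-p)/(p-1)}\tilde v_n\,(\varphi\circ T_n)\,\rho\,dx = \int_\Omega \tilde u_n\,(\Delta_{G_n}\varphi)\circ T_n\,\rho\,dx.
\]
By Lemma \ref{le:3.2}, $(\Delta_{G_n}\varphi)\circ T_n \to \Delta_\rho\varphi$ uniformly; continuity of $f$ and $\varphi$ combined with \eqref{eq:Tn} gives $f\circ T_n \to f$ and $\varphi\circ T_n \to \varphi$ uniformly. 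Combined with the strong convergences above, these pass to
\[
-\int_\Omega \bar v\,\Delta_\rho\varphi\,\rho\,dx + \lambda\int_\Omega(f-\bar u)\,\varphi\,\rho\,dx = 0, \quad \int_\Omega |\bar v|^{(2-p)/(p-1)}\bar v\,\varphi\,\rho\,dx = \int_\Omega \bar u\,\Delta_\rho\varphi\,\rho\,dx.
\]
The second identity asserts $\Delta_\rho\bar u = |\bar v|^{(2-p)/(p-1)}\bar v$ distributionally; elliptic regularity for the divergence-form weighted Laplacian with Neumann data (using $\rho\in C^1(\overline\Omega)$ and the smoothness of $\partial\Omega$) then places $\bar u$ in $W$, whence $\bar v = |\Delta_\rho\bar u|^{p-2}\Delta_\rho\bar u$. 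Substituting into the first identity shows $\bar u$ solves \eqref{CP11}, and uniqueness of the weak solution forces $\bar u = u$. Independence of the limit from the extracted subsequence then upgrades this to convergence of the full sequence, proving \eqref{eq:th:3.4:00a}.

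The main obstacle is the nonlinear identification: merely weak convergence of $|\Delta_{G_n}u_n|^{p-2}\Delta_{G_n}u_n$ does not suffice to take limits through the nonlinearity. The resolution --- and the reason for recasting \eqref{P} as a Poisson system in $v_n$ --- is that Corollary \ref{th:3.4} applied to $-\Delta_{G_n}v_n = \lambda(u_n-f)$ produces a nonlocal Dirichlet-type bound on $\tilde v_n$ strong enough for Lemma \ref{le:2.3} to yield strong $L^{p/(p-1)}$ compactness (with pointwise a.e.\ convergence), from which strong $L^p$ convergence of $|\tilde v_n|^{(2-p)/(p-1)}\tilde v_n$ follows by dominated convergence.
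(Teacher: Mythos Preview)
Your route is genuinely different from the paper's and worth noting. The paper extracts only a weak $L^{p/(p-1)}$ limit $\vartheta$ of $|\Delta^{\tilde\eta}_{\rho,\varepsilon_n}\tilde u_n|^{p-2}\Delta^{\tilde\eta}_{\rho,\varepsilon_n}\tilde u_n$ and identifies $\vartheta=|\Delta_\rho u|^{p-2}\Delta_\rho u$ via the Minty--Browder monotonicity trick (testing the weak form against $\tilde u_n$ itself, using the monotonicity of $s\mapsto|s|^{p-2}s$, and finally taking $v=u-\alpha\varphi$ with $\alpha\to 0^\pm$). You instead apply Corollary~\ref{th:3.4} a \emph{second} time, to the equation $-\Delta_{G_n}v_n=\lambda(u_n-f)$, in order to gain strong compactness of $\tilde v_n$ and identify the nonlinearity pointwise. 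This trades the monotone-operator argument for a second appeal to the Poisson estimates, which is more elementary; the paper's approach, on the other hand, never needs compactness for $v_n$ at all. Both proofs establish $\bar u\in W$ the same way, via elliptic regularity for the limiting Neumann problem.

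There is, however, a gap in your execution of the nonlinear identification. You invoke ``the uniform $L^\infty$ bound and dominated convergence'' to pass from strong convergence of $\tilde v_n$ to strong $L^p$ convergence of $|\tilde v_n|^{(2-p)/(p-1)}\tilde v_n$, but \eqref{eq:th3.3a} is only proved for small $\lambda$, whereas Theorem~\ref{th:4.2} carries no such restriction. For general $\lambda$ and $1<p<2$ the map $s\mapsto|s|^{(2-p)/(p-1)}s$ is superlinear, dominated convergence is unavailable, and moreover the $L^{p/(p-1)}$ bound on $\tilde u_n$ needed to run Corollary~\ref{th:3.4} with $q=p/(p-1)>2$ on the $v_n$-equation does not follow from \eqref{eq:th3.3}. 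The repair is that you in fact only need \emph{weak} identification, since your test functions are smooth: apply Corollary~\ref{th:3.4} to the $v_n$-equation with $q=2$ (legitimate because $\tilde u_n$ is uniformly in $L^2$), obtain strong $L^2$ and hence a.e.\ convergence of $\tilde v_n$ along a subsequence, and combine with the uniform $L^p$ bound on $\Delta^{\tilde\eta}_{\rho,\varepsilon_n}\tilde u_n=|\tilde v_n|^{(2-p)/(p-1)}\tilde v_n$ from \eqref{eq:th3.3} to identify its weak $L^p$ limit as $|\bar v|^{(2-p)/(p-1)}\bar v$. That suffices to pass to the limit in both of your displayed identities.
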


\begin{proof}
Lemma \ref{le:2.1} indicates that
there exists a sequence of transportation maps $\{T_n\}_{n=1}^\infty$ from $\mu$ to $\mu_n$. By Remark \ref{re:1}, conclusion \eqref{eq:th:3.4:00a} is equivalent to
  \begin{equation}\label{eq:th3.4:0}
    \lim_{n\rightarrow\infty}\int_{\Omega}|\tilde{u}_n(x)-u(x)|^{p}dx=0.
  \end{equation}
  where $\tilde{u}_n=u_n\circ T_n$.

  1. The transportation map $T_n$ allows us to rewrite the estimate \eqref{eq:th3.3} as
\begin{equation}\label{eq:th3.4:1a}
  \int_\Omega\left|\Delta^{\tilde{\eta}}_{\rho,\varepsilon_n}\tilde{u}_n(x) \right|^p\rho(x)dx
  +\frac{\lambda}{2}\int_\Omega|\tilde{u}_n(x)|^2\rho(x)dx
  \leq C.
\end{equation}
It follows from Corollary \ref{th:3.4} that
\begin{equation}\label{eq:th3.4:1b}
  \int_\Omega |\tilde{u}_n|^pdx+\int_{\Omega}\int_{\Omega}{\zeta}_{\varepsilon_n}(|x-y|)\left|\frac{\tilde{u}_n(y)-\tilde{u}_n(x)}{\varepsilon_n}\right|^pdydx\leq C,
\end{equation}
if $1<p< 2$, and
\begin{align}\label{eq:th3.4:1c}
\begin{split}
  &\int_\Omega |\tilde{u}_n|^pdx
  +\int_\Omega\int_\Omega\zeta_{\varepsilon_n}(|x-y|)\left|\frac{\tilde{u}_n(y)-\tilde{u}_n(x)}{\varepsilon_n}\right|^2dydx\\
  &+\int_\Omega\int_\Omega\zeta_{\varepsilon_n}(|x-y|)\left|\frac{(|\tilde{u}_n|^{(p-2)/2}\tilde{u}_n)(y)-(|\tilde{u}_n|^{(p-2)/2}\tilde{u}_n)(x)}{\varepsilon_n}\right|^2dydx
  \leq C,
\end{split}
\end{align}
if $p\geq2$.

We conclude from estimates \eqref{eq:th3.4:1a}--\eqref{eq:th3.4:1c}
and the $L^p$ boundedness of $\Delta^{\tilde{\eta}}_{\rho,\varepsilon_n}\tilde{u}_n$, i.e.,
    \begin{equation*}
      \int_\Omega\left||\Delta^{\tilde{\eta}}_{\rho,\varepsilon_n}\tilde{u}_n|^{p-2}\Delta^{\tilde{\eta}}_{\rho,\varepsilon_n}\tilde{u}_n \right|^{\frac{p}{p-1}}dx
      =\int_\Omega\left|\Delta^{\tilde{\eta}}_{\rho,\varepsilon_n}\tilde{u}_n(x) \right|^pdx\leq C
    \end{equation*}
that
there exists a subsequence of $\{\tilde{u}_n\}$ (still denoted by itself), functions $u\in L^2(\Omega)\cap L^p(\Omega)$, $\theta\in L^p(\Omega)$, and $\vartheta\in L^{p/(p-1)}(\Omega)$, such that
\begin{align}\label{coverge:1}
  \tilde{u}_n&\rightharpoonup u,\quad \textrm{in}~ L^2(\Omega)\cap L^p(\Omega),\\\label{coverge:2}
  \Delta^{\tilde{\eta}}_{\rho,\varepsilon_n}\tilde{u}_n + \tilde{u}_n&\rightharpoonup \theta,\quad \textrm{in}~ L^p(\Omega), \\\label{coverge:3}
  |\Delta^{\tilde{\eta}}_{\rho,\varepsilon_n}\tilde{u}_n|^{p-2}
  \Delta^{\tilde{\eta}}_{\rho,\varepsilon_n}\tilde{u}_n&\rightharpoonup
  \vartheta,\quad \textrm{in  } L^{p/(p-1)}(\Omega).
\end{align}
Moreover, we claim that $u\in W^{1,\min\{p,2\}}(\Omega)$ and
\begin{equation}\label{eq:LP}
  \tilde{u}_n\rightarrow u,\quad  \textrm{in}~ L^p(\Omega).
\end{equation}
In fact,
 Lemma \ref{le:2.3}, \eqref{eq:th3.4:1b}--\eqref{eq:th3.4:1c}, and \eqref{coverge:1} imply that $u\in W^{1,\min\{p,2\}}(\Omega)$ and $\tilde{u}_n\rightarrow u$ in $L^{\min\{p,2\}}(\Omega)$ (up to a subsequence).
 To prove \eqref{eq:LP} for $p>2$,
we further extract a subsequence (still denoted by itself) such that $\tilde{u}_n\rightarrow u$ a.e. in $\Omega$.
Notice that
\begin{equation*}
  \int_\Omega\left| |\tilde{u}_n|^{(p-2)/2}\tilde{u}_n \right|^2dx
  =\int_\Omega|\tilde{u}_n|^pdx\leq C.
\end{equation*}
It follows from \cite[Chapter II, Lemma 2.3]{ladyzhenskaia1968linear} that
\begin{equation*}
  |\tilde{u}_n|^{(p-2)/2}\tilde{u}_n
  \rightharpoonup |u|^{(p-2)/2}u,\quad \textrm{in}~ L^2(\Omega).
\end{equation*}
We utilize \eqref{eq:th3.4:1c} and apply Lemma \ref{le:2.3} once again for $|\tilde{u}_n|^{(p-2)/2}\tilde{u}_n$ to obtain that
\begin{equation*}
  |\tilde{u}_n|^{(p-2)/2}\tilde{u}_n
  \rightarrow |u|^{(p-2)/2}u,\quad \textrm{in}~ L^2(\Omega).
\end{equation*}
Namely,
\begin{align*}
  0&=\lim_{n\rightarrow\infty}\int_\Omega\left| |\tilde{u}_n|^{(p-2)/2}\tilde{u}_n -|u|^{(p-2)/2}u\right|^2dx\\
  &=\lim_{n\rightarrow\infty}
  \int_\Omega |\tilde{u}_n|^pdx
  +\int_\Omega |u|^pdx
  -2\lim_{n\rightarrow\infty}\int_\Omega|\tilde{u}_n|^{(p-2)/2}\tilde{u}_n  |u|^{(p-2)/2}udx\\
  &=\lim_{n\rightarrow\infty}
  \int_\Omega |\tilde{u}_n|^pdx
  -\int_\Omega |u|^pdx.
\end{align*}
This together with \eqref{coverge:1} imply \eqref{eq:LP}.

Since equation \eqref{CP} admits a unique solution, the whole sequence $\{\tilde{u}_n\}$ (not only a subsequence) converges to $u$.
This verifies \eqref{eq:th3.4:0}.
To finish the proof, we only need to show that $u\in W$ and $u$ satisfies \eqref{CP11}.

2.
Let us now prove $u\in W$ and identify $\theta$. For any $\varphi\in C^\infty(\overline{\Omega})$ with $\frac{\partial\varphi}{\partial\vec{n}}=0$ on $\partial\Omega$,
Lemma \ref{le:3.2} and \eqref{coverge:2} yield
\begin{align*}
  \int_\Omega \left(\Delta^{\tilde{\eta}}_{\rho,\varepsilon_n}\tilde{u} _n+\tilde{u} _n\right)\cdot{\varphi}\rho dx
  &=\int_\Omega\tilde{u} _n\rho\cdot\Delta^{\tilde{\eta}}_{\rho,\varepsilon_n}{\varphi} dx + \int_{\Omega}\tilde{u} _n {\varphi}\rho dx\\
  &\rightarrow\int_\Omega  u\rho\cdot\Delta_{\rho}\varphi dx
 + \int_{\Omega}u \varphi\rho dx\\
  &=
  -\frac{\sigma_\eta}{2}\int_\Omega \rho^2\nabla u\cdot\nabla\varphi dx+ \int_{\Omega}u \varphi\rho dx=\int_\Omega\theta\varphi\rho dx.
\end{align*}
This means that $u$ is a weak solution of
\begin{align*}
  \begin{cases}
    \Delta_\rho u + u =\theta, & \mbox{in } \Omega,\\
    \frac{\partial u}{\partial \vec{n}}=0, & \mbox{on  } \partial\Omega.
  \end{cases}
\end{align*}
Under assumption (A6), the $L^p$ theory for elliptic equations \cite[Theorem 2.4.2.7]{grisvard2011elliptic} implies that $u\in W$ and $\theta=\Delta_\rho u +u$ a.e. in $\Omega$.

3. Now we pass to the limit $n\rightarrow \infty$ for equation \eqref{P} to prove that $u$ satisfies \eqref{CP11}.
By the transportation map $T_n$, it can be rewritten as
\begin{align*}
  -\Delta^{\tilde{\eta}}_{\rho,\varepsilon_n}
  (|\Delta^{\tilde{\eta}}_{\rho,\varepsilon_n}\tilde{u}_n|^{p-2}
  \Delta^{\tilde{\eta}}_{\rho,\varepsilon_n}\tilde{u}_n)(x)
   +\lambda(\tilde{f}_n(x)-
  \tilde{u}_n(x)) =0, \quad x\in\Omega.
\end{align*}
Let $\varphi\rho$, where $\varphi\in W$, be the test function. We have
\begin{align*}
  -\int_{\Omega}\Delta^{\tilde{\eta}}_{\rho,\varepsilon_n}
  (|\Delta^{\tilde{\eta}}_{\rho,\varepsilon_n}\tilde{u}_n|^{p-2}
  \Delta^{\tilde{\eta}}_{\rho,\varepsilon_n}\tilde{u}_n)
  {\varphi}\rho dx+\lambda\int_{\Omega}(\tilde{f}_n-
  \tilde{u}_n){\varphi}\rho dx=0,
\end{align*}
which is equivalent to, by nonlocal integration by parts,
\begin{align}\label{eq:th3.4:3}
  -\int_{\Omega}|\Delta^{\tilde{\eta}}_{\rho,\varepsilon_n}
  \tilde{u}_n|^{p-2}\Delta^{\tilde{\eta}}_{\rho,\varepsilon_n}
  \tilde{u}_n
  \cdot\Delta^{\tilde{\eta}}_{\rho,\varepsilon_n}{\varphi}\rho dx+\lambda\int_{\Omega}(\tilde{f}_n-
  \tilde{u}_n){\varphi}\rho dx=0.
\end{align}
We utilize \eqref{coverge:3}, Lemma \ref{le:3.2},  and assumption (A2) to pass to the limit to find
\begin{equation}\label{eq:th3.4:4}
  -\int_\Omega\vartheta\Delta_\rho\varphi\rho dx+\lambda\int_\Omega(f-u)\varphi\rho dx=0.
\end{equation}
We are left to show that
\begin{equation}\label{eq:th3.4:5}
  \int_\Omega\vartheta\Delta_\rho\varphi\rho dx=
  \int_\Omega|\Delta_\rho u|^{p-2}\Delta_\rho u\Delta_\rho\varphi\rho dx.
\end{equation}
By the monotonicity of $|s|^{p-2}s$,
\begin{equation*}
  \int_\Omega\left(|\Delta^{\tilde{\eta}}_{\rho,\varepsilon_n}\tilde{u}_n|^{p-2}
  \Delta^{\tilde{\eta}}_{\rho,\varepsilon_n}\tilde{u}_n
  -|\Delta_\rho v|^{p-2}\Delta_\rho v\right)
  (\Delta^{\tilde{\eta}}_{\rho,\varepsilon_n}\tilde{u}_n-\Delta_\rho v)\rho dx\geq 0,
\end{equation*}
holds for any $v\in W$.
Combining it with \eqref{eq:th3.4:3} for ${\varphi}=\tilde{u}_n$ leads us to
\begin{align*}
\int_\Omega\left(|\Delta^{\tilde{\eta}}_{\rho,\varepsilon_n}\tilde{u}_n|^{p-2}
\Delta^{\tilde{\eta}}_{\rho,\varepsilon_n}\tilde{u}_n
  \right)
  \Delta_\rho v\rho dx+
  \int_\Omega\left(|\Delta_\rho v|^{p-2}\Delta_\rho v\right)
  (\Delta^{\tilde{\eta}}_{\rho,\varepsilon_n}\tilde{u}_n-\Delta_\rho v)\rho dx\\
  -\lambda\int_{\Omega}(\tilde{f}_n-
  \tilde{u}_n)\tilde{u}_n\rho dx\leq0.
\end{align*}
Let $n\rightarrow\infty$, we obtain
\begin{align*}
  \int_{\Omega}\vartheta\Delta_\rho v \rho dx
  +\int_\Omega |\Delta_\rho v|^{p-2}\Delta_\rho v\Delta_\rho(u-v)\rho dx
  -\lambda\int_\Omega(f-u)u\rho dx\leq 0.
\end{align*}
Substituting it into \eqref{eq:th3.4:4} with $\varphi=u$ to obtain
\begin{equation*}
  \int_\Omega(|\Delta_\rho v|^{p-2}\Delta_\rho v-\vartheta)\Delta_\rho(u-v)\rho dx\leq 0.
\end{equation*}
Taking $v=u-\alpha\varphi$ in the above with $\alpha>0$
and letting $\alpha\rightarrow 0^+$, we have
\begin{equation*}
  \int_\Omega(|\Delta_\rho u|^{p-2}\Delta_\rho u-\vartheta)\Delta_\rho\varphi\rho dx\leq 0.
\end{equation*}
An opposite inequality is obtained by choosing $\alpha<0$ and letting $\alpha\rightarrow 0^-$.
Consequently,
\eqref{eq:th3.4:5} holds and $u$ satisfies \eqref{CP11}.
\end{proof}


\begin{remark}
  According to $\eqref{eq:hyper}$,
 the hypergraph $p$-Laplacian equation
  \begin{equation}\label{eq:hypereq}
    \textrm{div}_{H_n} (|\nabla_{H_n} u|^{p-2} \nabla_{H_n} u)(x_i)
    +\lambda(f(x_i)-u(x_i))=0,\quad x_i\in\Omega_n,
  \end{equation}
  is equivalent to the $p$-biharmonic equation \eqref{P} on graph $G'_n$.
  Here we rescale both $\nabla_{H_n}$ and $\textrm{div}_{H_n}$ with the constant $\frac{1}{n\varepsilon_n^2}$.
  Recall that $G_n'$ has the same structure as $G_n$ except that all weights in $G_n'$ are set to one. This corresponds to  $\eta=\chi_{[0,1]}$.
  Consequently, the conclusion \eqref{eq:th:3.4:00a} also holds for equation \eqref{eq:hypereq}.
\end{remark}

\section*{Acknowledgments}
The authors would like to thank the referee for the valuable comments and suggestions.
KS is supported by China Scholarship Council. The authors acknowledge support from DESY (Hamburg, Germany), a member of the Helmholtz Association
HGF.



%

\bibliographystyle{unsrt}
\bibliography{references}

\end{document}